\documentclass[leqno,a4paper]{amsart}

\usepackage{amsmath,amsfonts,amssymb}
\usepackage{verbatim}
\usepackage{url}
\usepackage{tikz}
\usepackage{setspace}
\usepackage[margin=2.7cm]{geometry}
\usepackage[utf8]{inputenc}
\usepackage[T1]{fontenc}
\usepackage[colorlinks]{hyperref}
\definecolor{linkblue}{RGB}{1,1,190}
\definecolor{citered}{RGB}{190,1,1}
\hypersetup{
	linkcolor=linkblue,
	urlcolor=linkblue,
	citecolor=citered
}
\usetikzlibrary{arrows}

\linespread{1.2}

\def\d{{\sf d}}

\def\E{{\sf E}}
\def\v{{\sf v}}
\def\HH{{\sf H}}
\def\F{{\mathcal F}}
\def\bd{{\boldsymbol{\cdot}}}
\def\supp{\text{supp}}
\def\stab{\text{stab}}

\def\Z{\mathbb Z}

\makeatletter
\newcommand{\dotprod}{\mathbin{\mathpalette\dotprod@\relax}}
\newcommand{\dotprod@}[2]{%
	\ooalign{$\m@th#1{\LARGE\bd}$\cr\hidewidth$\m@th#1\prod$\hidewidth\cr}%
}

\makeatother

\theoremstyle{plain}
\newtheorem{theorem}{Theorem}[section]
\newtheorem*{theorem*}{Theorem}
\newtheorem{lemma}[theorem]{Lemma}

\newtheorem{proposition}[theorem]{Proposition}
\newtheorem{remark}[theorem]{Remark}

\theoremstyle{definition}

\numberwithin{equation}{section}
\newtheorem{claim}{}[theorem]

\subjclass[2010]{11B75 (primary), 20D60, 11P70 (secondary)}
\title{On zero-sum problems over metacyclic groups $C_n \rtimes_s C_2$}
\keywords{product-one sequences, Gao's constant, metacyclic groups, zero-sum problems}

\author{Jun Seok Oh, Sávio Ribas, Kevin Zhao, and Qinghai Zhong}
\address{
	Department of Mathematics Education\\
	Jeju National University\\
	Jeju\\
	63243\\
	Republic of Korea
}
\email{junseok.oh@jejunu.ac.kr}

\address{
	Departamento de Matem\'{a}tica\\
	Universidade Federal de Ouro Preto\\
	Ouro Preto, MG\\
	35402-136\\
	Brazil
}
\email{savio.ribas@ufop.edu.br}

\address{
	School of Mathematics and Statistics \&
	China Center for Applied Mathematics of Guangxi\\
	Nanning Normal University\\ Nanning\\
	530100\\
	China
}
\email{zhkw-hebei@163.com}

\address{Institute of Mathematics and Interdisciplinary Sciences, Xidian University, Xi'an 710126, China \&
	Department of Mathematics and Scientific Computing\\
	University of Graz\\
	NAWI Graz\\
	Graz\\
	8010\\
	Austria
}
\email{qinghai.zhong@uni-graz.at, zhong@xidian.edu.cn}

\date{}

\usepackage{xcolor}

\begin{document}

	\maketitle
	
	\begin{abstract}
    Let $G$ be a finite group. A finite collection of elements from $G$, where the order is disregarded and repetitions are allowed, is said to be a product-one sequence if its elements can be ordered such that their product in $G$ equals the identity element of $G$.
    Then, the Gao's constant $\E(G)$ of $G$ is the smallest integer $\ell$ such that every sequence of length at least $\ell$ has a product-one subsequence of length $|G|$. For a positive integer $n$, we denote by $C_n$ a cyclic group of order $n$.
    Let $G = C_n \rtimes_s C_2$ with $s^2\equiv 1\pmod n$ be a metacyclic group. The direct and inverse problems of $\mathsf E(G)$ were settled recently, except for the case that $G=C_{3n_2}\rtimes_s C_2$ with $n_2\neq 1$, $\gcd(n_2,6)=1$, $s\equiv -1 \pmod 3$,  and $s\equiv 1\pmod {n_2}$. In this paper, we complete the remaining case and hence for all metacyclic groups of the form $G=C_n \rtimes C_2$, the Gao's constant and the associated inverse problem are now fully settled (see Theorem \ref{thm:except}).
	\end{abstract}

	\bigskip
	\section{Introduction}

	Let $G$ be a finite group. By a sequence $S$ over $G$, we mean a finite collection of elements from $G$, where the order is disregarded and repetitions are allowed. The {\em zero-sum problems} study the conditions that ensure  a given sequence over $G$ contains a subsequence whose product (or sum, in the abelian case) is the identity of $G$. Such subsequences are called {\em product-one} (or {\em zero-sum}). Originating in classical works of Erd\H os, Ginzburg and Ziv \cite{EGZ}, van Emde Boas and Kruyswijk \cite{vEBK}, and Olson \cite{Ols1,Ols2} during the 1960s, these problems connect to several areas (see, for instance, \cite{AGP,PlSc,GeHK}), and have been extended from abelian to non-abelian groups in the 1980s. This explains why it is common to use multiplicative notation and the term {\em product-one} instead of additive notation and the term {\em zero-sum}.

	The Gao's constant $\E(G)$ is the smallest positive integer $\ell$ such that every sequence of length at least $\ell$ has a product-one subsequence of length $|G|$.
	This invariant has been extensively studied for finite abelian groups, and it is closely related to $\d(G)$, the small Davenport constant of $G$, which is the maximal length of a sequence over $G$ that do not have product-one subsequences (see \cite[Chapter 16]{Gry}). In fact, it is known that $\E(G) \ge \d(G)+|G|$ and equality holds for abelian groups \cite{Ga0} and several others (see \cite{AMR1,AMR,Bas,MLMR,HZ,QL2,GoSa} for non-abelian groups). Zhuang and Gao \cite{ZhGa} conjectured that this equality holds for every finite group.
	
	The {\em direct problem} associated to $\E(G)$ is to determine the exact value of $\E(G)$, while the {\em inverse problem} seeks to describe the structure of sequences with maximal length that do not have product-one subsequences of length $|G|$ (see also \cite{MR1,MR2,MR3,GLQ,OZ1,OZ2,Zh,QL1} for other recent developments on the direct and inverse problems for non-abelian groups).

	Let $C_n$ be a cyclic group of order $n$, and let $G = C_{n} \rtimes_s C_2 = \langle x, y \colon x^{2} = y^{n} = 1_G, yx = xy^{s} \rangle$ with $s^2 \equiv 1 \pmod{n}$,  be a metacyclic group of order $2n$. If $s = 1$ or $s = -1$, then $G \cong C_n \times C_2$ or $G \cong D_{2n}$, a dihedral group of order $2n$.
	If $s \not\equiv \pm 1 \pmod{n}$, then $n = n_1 n_2$ for some $n_1, n_2 \in \mathbb Z$ with $\gcd (n_1, n_2) \in \{ 1, 2 \}$ for which $s \equiv -1 \pmod{n_1}$ and $s \equiv 1 \pmod{n_2}$ (see Remark~\ref{rmk}). With these notation, the direct and inverse problems for $\mathsf E (G)$ have been settled for almost all metacyclic groups of the given form except for a ``small'' (yet infinite) family of cases for which $n_1=3$ and $n_2$ is odd. Note that $D_{2n_1}$ is isomorphic to some quotient group of $G$. By using the inductive method, the inverse structural theorem for $D_{2n_1}$ (see \cite[Theorem 1.2]{OZ1}) is needed.
	 Since the structure of extremal sequences for the group $D_6$ has more possibilities than other groups $D_{2n_1}$ with $n_1>3$, the method used in \cite{AMR} does not work. More precisely, the difficulties are as follows: \cite[Lemma 2.1]{AMR} (see also \cite{Ga}) provides an inverse problem for $\E(C_n)$ which ensures the existence of two elements with high multiplicity. These elements are essential in the argument to partially undo subsequences whose products lie in $\langle y^{n_2} \rangle$, and to construct new subsequences whose products belong to $x \langle y^{n_2} \rangle$ (see Claim A(iii,iv) in the proof of \cite[Proposition 5.1]{AMR}). Moreover, when $n_1 = 3$, the last type of sequence described in Lemma \ref{lem:D6}.2 may occur, and in this case Claim A(iii) in the proof of \cite[Proposition 5.1]{AMR} fails, thereby breaking a crucial step of the method.

	In this paper, we consider this exceptional case with the help of the DeVos-Goddyn-Mohar Theorem (see Lemma \ref{le1}). This theorem allows us to control how concentrated the projection of a sequence $S$ onto $\langle y^3 \rangle$ can be within cosets of a subgroup $H$, defined as the stabilizer of an appropriate set of restricted subproducts arising from this projection. In particular, either this set of subproducts coincides with $\langle y^3 \rangle$, or there exists a unique coset $y^{3t}H$ that contains many terms of $S$. This is a key step in the proof of Proposition \ref{pro1}, which in turn leads to the resolution of the exceptional case stated below.

		\smallskip
		\begin{theorem} \label{thm:main}~
		Let $G = C_{3n_2} \rtimes_s C_2$ be a metacyclic group of order $6n_2$, where $n_2\neq 1$, $\gcd(6, n_2) = 1$,  $s \equiv -1 \pmod{3}$, and $s \equiv 1 \pmod{n_2}$.
			\begin{enumerate}
			\item $\E(G) = 9n_2$.
		
			\smallskip
			\item Let $S \in \F(G)$ be a sequence of length $9n_2-1$. Then, $S$ has no product-one subsequence of length $6n_2$ if and only if there exist $\alpha, \tau \in G$ such that $G = \langle \alpha, \tau \colon \tau^2 = \alpha^n = 1_G, \alpha \tau = \tau \alpha^s \rangle$,  and $S = (\alpha^{t_1})^{[6n_2-1]} \bd (\alpha^{t_2})^{[3n_2-1]} \bd (\tau \alpha^{t_3})$, where $t_1, t_2, t_3 \in \Z$ with $\gcd(t_1-t_2,3n_2) = 1$.
			\end{enumerate}
		\end{theorem}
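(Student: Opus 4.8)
The plan is to work with the normal subgroup $N=\langle y\rangle\cong C_{3n_2}$ of index two in $G$. Since $\gcd(6,n_2)=1$ we have $N\cong C_3\times C_{n_2}$, and conjugation by $x$ acts on $N$ by $y^a\mapsto y^{sa}$, which is inversion on the $C_3$-part (because $s\equiv-1\pmod 3$) and the identity on the $C_{n_2}$-part (because $s\equiv 1\pmod{n_2}$); thus $G\cong D_6\times C_{n_2}$. For a sequence $S\in\F(G)$ I would write $S=T\bd W$, where $T$ is the subsequence of terms of $S$ lying in $N$ and $W$ the subsequence of terms lying in the coset $xN$, and set $m=|W|$. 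The key preparatory step is an \emph{ordering criterion}: pushing every occurrence of $x$ in a product to the front via $y^ax=xy^{sa}$, one checks that a subsequence $S'\mid S$ with an even number $2j$ of terms in $xN$ admits a product-one ordering if and only if (i) the $C_{n_2}$-components of all terms of $S'$ sum to $0$ in $C_{n_2}$ (a condition independent of the ordering), and (ii) after flipping the sign of a suitable $j$-element subset of the $xN$-terms of $S'$ — and, as soon as $j\ge 1$, of an arbitrary subset of its $N$-terms — the $C_3$-components sum to $0$ in $C_3$. So for $j\ge 1$ the $C_3$-obstruction carries a lot of slack, while for $j=0$ the whole problem collapses to the zero-sum problem of prescribed length $6n_2$ over the cyclic group $C_{3n_2}$.

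With this in hand, the lower bound in (1) and the ``if'' part of (2) are immediate. In $S=(\alpha^{t_1})^{[6n_2-1]}\bd(\alpha^{t_2})^{[3n_2-1]}\bd(\tau\alpha^{t_3})$ the term $\tau\alpha^{t_3}$ appears only once, so by parity no product-one subsequence of length $6n_2$ can contain it; such a subsequence would then consist of $a$ copies of $\alpha^{t_1}$ and $6n_2-a$ copies of $\alpha^{t_2}$ with $a\le 6n_2-1$ and $6n_2-a\le 3n_2-1$, forcing $3n_2<a<6n_2$, while its product equals $\alpha^{a(t_1-t_2)}$ (using $6n_2t_2\equiv 0\pmod{3n_2}$), which can be $1_G$ only if $3n_2\mid a$ — impossible since $\gcd(t_1-t_2,3n_2)=1$. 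Hence $\E(G)\ge 9n_2$.

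For the upper bound in (1) and the ``only if'' part of (2) I would take $S\in\F(G)$ with $|S|\ge 9n_2-1$ having no product-one subsequence of length $6n_2$, and split into cases on $m$. If $m\le 1$, parity forces every length-$6n_2$ product-one subsequence to lie in $N$, so $T$ (of length $\ge 9n_2-2$) has no zero-sum subsequence of length $6n_2$ over $C_{3n_2}$; invoking the fact that every sequence over $C_{3n_2}$ of length $9n_2-1$ has a zero-sum subsequence of length $6n_2$, together with the corresponding inverse result at length $9n_2-2$ (the extremal sequences being $(\beta^{u_1})^{[6n_2-1]}\bd(\beta^{u_2})^{[3n_2-1]}$ with $\gcd(u_1-u_2,3n_2)=1$), forces $|S|=9n_2-1$, $m=1$, and $T=(\beta^{u_1})^{[6n_2-1]}\bd(\beta^{u_2})^{[3n_2-1]}$ for a generator $\beta$ of $N$. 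Taking $\alpha$ to be a generator of $N$ and $\tau$ to be an order-two element of $xN$ differing from the single term of $W$ by an element of $N$ (one checks $xN$ contains exactly three involutions, corresponding to the reflections of $D_6$) then rewrites $S$ in exactly the asserted normal form.

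The case $m\ge 2$ is the one I expect to be hard. Now an even number $2j\ge 2$ of terms of $W$ is available, so criterion (ii) is flexible, and the real task is to choose a subsequence $S'$ of length $6n_2$ with $2\le|S'\cap xN|\le m$ whose $C_{n_2}$-components sum to $0$ while leaving enough room to also zero the $C_3$-part. This is where I would invoke the DeVos--Goddyn--Mohar theorem (Lemma~\ref{le1}) on the sequence of $C_{n_2}$-components of $S$: it yields either a rich supply of length-$6n_2$ subsequences with zero $C_{n_2}$-sum — enough to steer the $C_3$-components by the flips in (ii) — or else the conclusion that almost all $C_{n_2}$-components coincide, in which case the problem reduces to an explicit finite subcase, organized by how the terms of $S$ distribute among the six elements of $D_6$ and treating small $m$ separately from large $m$. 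Either way one obtains a product-one subsequence of length $6n_2$, contradicting the choice of $S$. Assembling the cases gives $\E(G)=9n_2$ together with the stated classification of the extremal sequences.
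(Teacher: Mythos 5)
Your setup (the splitting $G\cong D_6\times C_{n_2}$, the ordering criterion for products, the verification of the lower bound, and the reduction of the case $m\le 1$ to Lemma~\ref{inverseCn}) is correct and matches the paper. But the case $m\ge 2$ is not a proof; it is a one-sentence plan, and it is precisely this case that constitutes the entire content of the paper (Proposition~\ref{prop:main}, supported by Proposition~\ref{pro1}). Saying that DGM ``yields either a rich supply of length-$6n_2$ subsequences with zero $C_{n_2}$-sum --- enough to steer the $C_3$-components --- or else almost all $C_{n_2}$-components coincide'' names the right tool but none of the ideas needed to make it work. Concretely, the paper does not apply DGM to length-$6n_2$ subsequences of $S$ at all. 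It first extracts eight subsequences $T_1,\ldots,T_8$ of length $n_2$ whose $C_{n_2}$-projections are product-one, chosen so that the number of $T_i$ meeting $x\langle y\rangle$ is maximal (condition \eqref{maxi}), and then studies the length-$8$ sequence $\sigma(T_1)\bd\ldots\bd\sigma(T_8)$ over $D_6$ via the inverse theorem for $\E(D_6)$ (Lemma~\ref{lem:D6}). The whole difficulty --- the reason the method of \cite{AMR} fails here --- is that this inverse theorem has an exceptional extremal form $1_G^{[5]}\bd\tau\bd\tau\alpha\bd\tau\alpha^2$, and handling it (Case 1 of Proposition~\ref{prop:main}) requires the singleton-product-set analysis of Lemma~\ref{lem:claima} and a re-decomposition of $T_6\bd T_7\bd T_8$. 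Your sketch never confronts this exceptional structure.

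Moreover, even in the branch where DGM is actually invoked (Subcase 2.2 of the paper, via Proposition~\ref{pro1}), the conclusion $\Pi_{n_2-1}(\varphi_1(S))=G_1$ is not a direct consequence of the DGM bound: ruling out a nontrivial stabilizer forces one coset to carry at least $7n_2+1-r$ terms, from which one builds $\nu\ge 6r+1$ blocks $V_i$ and must then run a swapping argument (the paper's Claim \ref{claim}) showing that \emph{every} re-partition of the blocks into seven $n_2$-product-$G_2$ pieces produces the same extremal $D_6$ pattern, which forces all $\sigma(V_i)$ to coincide and contradicts Lemma~\ref{inverseCn}.3. Nothing in your ``steer the $C_3$-components by the flips'' plan substitutes for this, and your fallback claim that the degenerate branch ``reduces to an explicit finite subcase'' cannot be right as stated, since $n_2$ is unbounded. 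In short: the easy reductions are fine, but the core of the theorem is missing.
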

		
		\smallskip
		 In conjunction with \cite[Theorems 1.6 and 1.7]{AMR}, the Gao's constant and the associated inverse problem are completely settled for all metacyclic groups of the form $C_n \rtimes C_2$ as follows.

		\smallskip
		\begin{theorem} \label{thm:except}~
		Let  $G = C_n \rtimes_s C_2$ be a  metacyclic group of order $2n$, where $n\ge 3$ and  $s^{2} \equiv 1 \pmod{n}$ with $s\not\equiv 1 \pmod n$.
			\begin{enumerate}
			\item $\E(G) = 3n$.
		
			\smallskip
			\item If $S \in \F(G)$ with $|S| = 3n-1$, then $S$ has no product-one subsequence of length $2n$ if and only if there exist $\alpha, \tau \in G$ for which $G = \langle \alpha, \tau \colon \tau^2 = \alpha^n = 1_G, \alpha \tau = \tau \alpha^s \rangle$ and $S$ has one of the following forms;
				\begin{itemize}
				\item for $n = 3$, either $S = 1^{[5]}_G \bd \tau \bd \tau \alpha \bd \tau \alpha^2$, or $S = (\alpha^{k_1})^{[5]} \bd (\alpha^{k_2})^{[2]} \bd \tau \alpha^{k_3}$, where $k_1, k_2, k_3 \in \mathbb Z$ with $\gcd (k_1 - k_2, n) = 1$.
			
				\smallskip
				\item for $n \ge 4$, $S = (\alpha^{t_1})^{[2n-1]} \bd (\alpha^{t_2})^{[n-1]} \bd \tau \alpha^{t_3}$, where $t_1, t_2, t_3 \in \Z$ with $\gcd(t_1-t_2,n) = 1$.
				\end{itemize}
			\end{enumerate}
		\end{theorem}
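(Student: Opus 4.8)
The plan is to deduce Theorem~\ref{thm:except} by combining Theorem~\ref{thm:main} with \cite[Theorems~1.6 and 1.7]{AMR}, after organizing the metacyclic groups $G = C_n \rtimes_s C_2$ according to the conjugation parameter $s$. In every case the common value is $\E(G) = 3n$, and the extremal sequences are partitioned along the same lines.

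First I would recall the trichotomy coming from Remark~\ref{rmk}. Either $s \equiv 1 \pmod{n}$, so that $G$ is abelian (and, when $2 \mid n$, $G \cong C_2 \oplus C_n$); or $s \equiv -1 \pmod{n}$, so that $G \cong D_{2n}$ is dihedral (with $n = 3$ giving $D_6$); or $s \not\equiv \pm 1 \pmod{n}$, in which case $n = n_1 n_2$ with $\gcd(n_1, n_2) \in \{1,2\}$ and $1 < n_1, n_2$, where $s \equiv -1 \pmod{n_1}$ and $s \equiv 1 \pmod{n_2}$. I would also note that the forms listed in Theorem~\ref{thm:except}(2) follow this split: the two shapes recorded for $n = 3$ are the extremal sequences of $D_6$, while the single generic shape $(\alpha^{t_1})^{[2n-1]} \bd (\alpha^{t_2})^{[n-1]} \bd (\tau \alpha^{t_3})$ with $\gcd(t_1 - t_2, n) = 1$ is the one occurring in every other sub-case with $n \ge 4$.

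Next I would invoke \cite[Theorems~1.6 and 1.7]{AMR} — together with the classical abelian theory \cite{Ga0} for $G \cong C_2 \oplus C_n$ (for which $\d(G) = n$, whence $\E(G) = \d(G) + |G| = 3n$) — to obtain both parts of Theorem~\ref{thm:except} for every $G$ that does not satisfy the hypotheses of Theorem~\ref{thm:main}. Concretely this covers the abelian case, all dihedral groups (in particular $D_6$, with its two special extremal shapes for $n = 3$), and all non-abelian non-dihedral $G$ except those with $n_1 = 3$, $n_2 > 1$ and $n_2$ odd; and since $n_1 = 3$ forces $\gcd(3, n_2) = 1$, the excluded family is exactly $\{\,G = C_{3 n_2} \rtimes_s C_2 : n_2 > 1,\ \gcd(n_2, 6) = 1,\ s \equiv -1 \pmod{3},\ s \equiv 1 \pmod{n_2}\,\}$, which is precisely the hypothesis of Theorem~\ref{thm:main}.

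For that remaining family I would finish with Theorem~\ref{thm:main}: part~(1) yields $\E(G) = 9 n_2 = 3n$, and part~(2) describes the length-$(9n_2 - 1)$ sequences with no product-one subsequence of length $6 n_2$ as $S = (\alpha^{t_1})^{[6 n_2 - 1]} \bd (\alpha^{t_2})^{[3 n_2 - 1]} \bd (\tau \alpha^{t_3})$ with $\gcd(t_1 - t_2, 3 n_2) = 1$; since here $n = 3 n_2 \ge 15$ and $6 n_2 - 1 = 2n - 1$, $3 n_2 - 1 = n - 1$, this is verbatim the $n \ge 4$ form of Theorem~\ref{thm:except}(2), which completes the proof. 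No new mathematical difficulty arises at this stage — the entire analytic burden sits inside Theorem~\ref{thm:main} and \cite{AMR} — so the only points demanding care are the bookkeeping ones: verifying via Remark~\ref{rmk} that the trichotomy is exhaustive and the hypothesis sets are matched correctly (in particular that it is the ``$(-1)$-part'' $n_1$, and not $n_2$, that must equal $3$ in the exceptional family), checking that the relabelling $n = 3 n_2$ aligns the two displays of the generic extremal sequence, and disposing of the small and degenerate abelian instances.
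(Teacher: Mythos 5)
Your proposal follows exactly the route the paper intends: Theorem~\ref{thm:except} is obtained by combining Theorem~\ref{thm:main} (for the exceptional family $n_1=3$, $n_2>1$, $n_2$ odd) with \cite[Theorems 1.6 and 1.7]{AMR} for everything else, and your bookkeeping for the exceptional family is right ($n=3n_2\ge 15$, $6n_2-1=2n-1$, $3n_2-1=n-1$, and $\gcd(n_1,n_2)\le 2$ with $n_1=3$ forces $\gcd(6,n_2)=1$ once $n_2$ is odd). The one point that needs repair is your treatment of the branch $s\equiv 1\pmod n$: when $n$ is odd, $C_n\rtimes_1 C_2\cong C_{2n}$ is cyclic, so $\d(G)=2n-1$ and $\E(G)=4n-1\neq 3n$; your assertion ``$\d(G)=n$, whence $\E(G)=3n$'' is only valid for $2\mid n$, where $C_2\oplus C_n$ has rank two. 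Thus the trivial-action case cannot simply be folded in via \cite{Ga0} as you state -- it must be excluded by the standing convention (as in \cite{AMR}) that $\rtimes_s$ denotes a genuinely metacyclic presentation, or else handled separately for even $n$ only; with that caveat the argument is the paper's own.
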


		\smallskip
		The paper is organized as follows. In Section \ref{sec:defnotlemmas}, we introduce our notation and the preliminary results. In Section~\ref{sec:additive}, we present the results from Additive Theory needed for the proof of our main theorem.
        In Section \ref{sec:proofmainresult}, we prove Theorem \ref{thm:main} after establishing a result on sequences over $G$ of length $9n_2-1$ containing at least two terms from $x \langle y \rangle$.

	\bigskip
	\section{Preliminaries}\label{sec:defnotlemmas}

	For integers $a, b \in \mathbb Z$, $[a, b] = \{ x \in \mathbb Z \colon a \le x \le b \}$ is the discrete interval.
	Let $G$ be a finite group with identity element $1_G$.
	For a subset $A \subset G$ and an element $g \in G$, we let $gA := \{ ga \in G \colon a \in A \}$, and  denote by $\langle A \rangle$ the subgroup of $G$ generated by $A$, and by $\mathsf H (A) := \stab (A) = \{ g \in G \colon gA = A \}$ the {\it stabilizer} of $A$. Then, $\mathsf H (A)$ is a subgroup of $G$, and it is easy to see that $\mathsf H (A) = G$ if and only if $A = G$.
	For a positive  integer $n$, we denote by $C_n$ a cyclic group of order $n$, and by $D_{2n}$ a dihedral group of order $2n$.

We  consider sequences over $G$ as elements of the free abelian monoid $\mathcal F (G)$ with basis $G$, under concatenation as the operation (denoted by $\bd$). In order to avoid confusion between multiplication in $G$ and multiplication in $\mathcal F(G)$, we denote multiplication in $G$ without an explicit symbol and  use brackets for all exponentiation in $\mathcal F (G)$.
Thus, a sequence $S \in \F(G)$ has the form
	\[
	  S \, = \, g_1 \bd \ldots \bd g_k \, = \, \small{\prod}^{\bullet}_{i \in [1,k]} g_i \, = \, \small{\prod}^{\bullet}_{g \in G} g^{[\v_g(S)]} \,,
	\]
	where $g_1, \ldots, g_k \in G$, $\v_g(S)$ is the {\em multiplicity} of $g$ in $S$, and $k = |S| = \sum_{g \in G} \v_g(S)$ is the {\em length} of $S$.
    We denote by $\supp (S) = \{ g \in G \colon \mathsf v_g (S) \ge 1 \}$ the {\it support} of $S$.
	A {\em subsequence} of $S$ is a divisor $T$ of $S$ in $\F(G)$, which is equivalent to $\v_g(T) \le \v_g(S)$ for every $g \in G$. In this case, we write $T \mid S$, and $S \bd T^{[-1]} = \prod^{\bullet}_{g \in G} g^{[\v_g(S) - \v_g(T)]}$ denote the subsequence of $S$ obtained by deleting the elements of $T$ from $S$.
	For a subset $H \subset G$, we denote by $S_H$ the subsequence of $S$ consisting of all elements from $H$.

	For a sequence $S = g_1 \bd \ldots \bd g_k$, the {\em set of products} and the {\em set of $n$-subproducts} of $S$ are denoted by
	\[
	  \pi(S) \, = \, \{  g_{f(1)} \ldots g_{f(k)}  \in G \colon f \text{ is a permutation of }[1,k] \} \quad \text{ and } \quad \Pi_n(S) = \bigcup_{T \mid S \atop |T|=n} \pi(T) \,,
	 \]
	respectively. A generic element of $\pi(S)$ will be denoted by $\sigma(S)$. In particular, if $G$ is abelian, then $\pi (S) = \{ \sigma (S) \}$ is a singleton set. The sequence $S \in \F(G)$ is called
	\begin{itemize}
	\item {\em product-one} if  $1_G \in \pi(S)$, and
	
	\smallskip
	\item {\em product-one free} if $1_G \not\in \cup_{1 \le n \le |S|} \Pi_n(S)$.
	\end{itemize}
	In particular, a product-one sequence of length $k$ (resp., a product-one free sequence of length $k$) is referred to as an {\it $k$-product-one sequence} (resp., {\it $k$-product-one free sequence}).

	If $S = g_1 \bd \ldots \bd g_k$ is a product-one sequence with $1_G = g_1 \ldots g_k$, then $1_G = g_i \ldots g_k g_1 \ldots g_{i-1}$ for all $i \in [1,k]$.
	For a normal subgroup $H$ of $G$, let $\phi_H \colon G \to G/H$ denote the natural homomorphism. By abuse of notation, we also write $\phi_H \colon \F(G) \to \F(G/H)$ for its extension to sequences, that is, $\phi_H ( g_1 \bd \ldots \bd g_k ) = \phi_H (g_1) \bd \ldots \bd \phi_H (g_k)$.
    In this case, we say that $S$ is an {\em $n$-product-$H$ sequence} if $\phi_H (S)$ is a product-one sequence (equivalently, $\pi(S) \cap H \neq \emptyset$) of length $n$, and $S$ is an {\em $n$-product-$H$ free sequence} if $\phi_H(S)$ has no a product-one subsequence of length $n$.
	
	With this notation, we have
	\[
	  \E(G) \, = \, \min\{\ell>0 \colon \text{every $S \in \F(G)$ with $|S| \ge \ell$ has a $|G|$-product-one subsequence}\} \,.
	\]

	Let $G = C_n \rtimes_s C_2$ be a metacyclic group of order $2n$ with $s^{2} \equiv 1 \pmod{n}$.
	Then $n$ can be factored into a product of two integers as follows.
	
	\smallskip
	\begin{remark} \label{rmk}~
	Let $n \ge 3$ be an integer and $s \in \Z$ be such that $s^2 \equiv 1 \pmod n$. An argument similar to that in \cite[Lemma 1.4]{AMR} shows  $n$ admits a factorization of the form $n = n_1n_2$ for which $s \equiv -1 \pmod {n_1}$, $s \equiv 1 \pmod {n_2}$ and $\gcd(n_1,n_2) \in \{1,2\}$. If $n_1,n_2$ are coprime (this occurs, for instance, when $n$ is odd), then the group $G = C_n \rtimes_s C_2 = \langle x,y \colon x^2 = y^n = 1_G, yx = xy^s \rangle$ is isomorphic to $C_{n_2} \times D_{2n_1} = \langle y^{n_1} \rangle \times \langle x, y^{n_2} \rangle$.
	\end{remark}

	\smallskip
	In \cite{AMR}, the authors proved the direct and inverse problems for $\mathsf E (G)$ except for a ``small'' (yet infinite) family of cases; indeed, the only potential exception occurs when $n_1 = 3$ and $n_2$ is odd (see \cite[Theorems 1.6 and 1.7]{AMR}), since the arguments used for larger values of $n_1$ do not work in this case.

	We present here the results related to the Gao's constant, which will be referred to frequently throughout this paper. The following lemma can be found in \cite{EGZ}, \cite[Lemma 2.2]{AMR}, and \cite[Lemma 2.5]{OZ1}.

	\smallskip
	\begin{lemma} \label{inverseCn}~
	Let $n \ge 2$ be an integer and $S \in \F(C_n)$ be a sequence.
		\begin{enumerate}
		\item $\E(C_n) = 2n-1$.
		
		\smallskip
		\item If $|S| = 3n-1$, then $S$ has two disjoint $n$-product-one subsequences. In particular, $S$ has a $2n$-product-one subsequence.
		
		\smallskip
		\item If $|S| = 3n-2$, then $S$ is $2n$-product-one free if and only if there exist a generator $g$ of $C_n$ and $t_1, t_2  \in \Z$ with $\gcd(t_1 - t_2,n) = 1$ such that $S = (g^{t_1})^{[2n-1]} \bd (g^{t_2})^{[n-1]}$.
		\end{enumerate}
	\end{lemma}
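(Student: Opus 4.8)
\emph{Parts (1) and (2).} For the upper bound in (1) I would cite the Erdős–Ginzburg–Ziv theorem \cite{EGZ} directly: every sequence over $C_n$ of length $2n-1$ admits a product-one subsequence of length $n$, so $\E(C_n)\le 2n-1$. Since $C_n$ is abelian I pass to additive notation and identify $C_n=\Z/n\Z$, where ``product-one'' means ``zero-sum''. For the matching lower bound I would exhibit $S=0^{[n-1]}\bd 1^{[n-1]}$ of length $2n-2$: any length-$n$ subsequence uses $n-k$ copies of $1$ with $1\le n-k\le n-1$, hence has nonzero sum $n-k$, so $\E(C_n)\ge 2n-1$. For (2), with $|S|=3n-1\ge 2n-1$ I would apply (1) to obtain a length-$n$ product-one subsequence $T_1$, note that $S\bd T_1^{[-1]}$ still has length $2n-1$, and apply (1) again to obtain a disjoint length-$n$ product-one subsequence $T_2$; then $T_1\bd T_2$ is the desired length-$2n$ product-one subsequence.

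\emph{Part (3), sufficiency.} Writing the two values additively as $a=t_1g$ and $b=t_2g$, a length-$2n$ subsequence of $a^{[2n-1]}\bd b^{[n-1]}$ uses $j$ copies of $b$ with $1\le j\le n-1$ and $2n-j$ copies of $a$, so its sum is $2na+j(b-a)\equiv j(b-a)\pmod n$. As $\gcd(t_1-t_2,n)=1$, the element $b-a$ generates $C_n$, so $j(b-a)\equiv 0$ would force $n\mid j$, which is impossible; hence $S$ is $2n$-product-one free.

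\emph{Part (3), necessity — main line.} Here the plan is to bootstrap from a known two-value structure. Assuming $|S|=3n-2$ and $S$ is $2n$-product-one free, I would first extract, via (1), a length-$n$ product-one subsequence $T_1$; the complement $R:=S\bd T_1^{[-1]}$ has length $2n-2$ and can have no length-$n$ product-one subsequence, since such a subsequence combined with $T_1$ would be $2n$-product-one. Thus $R$ is a sequence of the maximal length $\E(C_n)-1=2n-2$ without length-$n$ product-one subsequences, and by the classical inverse form of the Erdős–Ginzburg–Ziv theorem it must have the shape $R=a^{[n-1]}\bd b^{[n-1]}$ with $\gcd(a-b,n)=1$. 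This inverse statement is the one genuinely hard ingredient; I would cite it, noting that it is also accessible through the DeVos–Goddyn–Mohar theorem used elsewhere in the paper.

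\emph{Part (3), necessity — finishing.} Next I would normalize: since $\gcd(a-b,n)=1$, the affine bijection $x\mapsto(b-a)^{-1}(x-a)$ sends $a\mapsto 0$ and $b\mapsto 1$, and because a constant shift $v$ changes a length-$2n$ sum by $2nv\equiv 0$, such maps preserve both subsequence lengths and $2n$-product-one freeness; so I may assume $a=0$ and $b=1$. Then $\v_0(S),\v_1(S)\ge n-1$, and since $T_1$ is product-one we have $\sigma(S)=\sigma(R)=n-1\equiv -1\pmod n$. The crux is to rule out a third value: if some $c\in\supp(S)\setminus\{0,1\}$ occurred, with residue in $\{2,\dots,n-1\}$, I would form the length-$(n-2)$ subsequence consisting of one $c$ together with $j:=n-1-c\in[0,n-3]$ copies of $1$ and $n-3-j$ copies of $0$ (the needed multiplicities are available as $\v_0(S),\v_1(S)\ge n-1$); its sum is $c+j\equiv -1\equiv\sigma(S)$, so deleting it leaves a length-$2n$ product-one subsequence, a contradiction. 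Hence $\supp(S)=\{0,1\}$ and $S=0^{[p]}\bd 1^{[q]}$ with $p+q=3n-2$. A final direct count—a length-$2n$ subsequence with $i$ zeros has sum $-i\pmod n$, which vanishes only for $i\in\{0,n,2n\}$—shows that avoiding all three forces $\{p,q\}=\{2n-1,n-1\}$. Undoing the normalization and writing the two values as $g^{t_1},g^{t_2}$ for a generator $g$ then yields the asserted form with $\gcd(t_1-t_2,n)=1$. I expect the inverse EGZ input and the bookkeeping in this last subsum-hitting step to be where all the difficulty lies; the remaining parts are routine.
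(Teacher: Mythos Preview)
The paper does not prove this lemma at all; it simply cites \cite{EGZ}, \cite[Lemma~2.2]{AMR}, and \cite[Lemma~2.5]{OZ1} and moves on. So there is no ``paper's own proof'' to compare against --- you have supplied a proof where the authors chose to quote the literature.

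Your argument is correct. Parts (1), (2), and the sufficiency direction of (3) are entirely routine, as you note. For the necessity direction of (3), your reduction is clean: extract one $n$-product-one block $T_1$, observe that the complement $R$ of length $2n-2$ must be free of $n$-product-one subsequences, invoke the inverse Erd\H{o}s--Ginzburg--Ziv theorem to force $R=a^{[n-1]}\bd b^{[n-1]}$ with $\gcd(a-b,n)=1$, normalize affinely to $(a,b)=(0,1)$, and then kill any hypothetical third support element $c\in[2,n-1]$ by building an explicit $(n-2)$-subsequence of sum $\sigma(S)$ whose removal leaves a $2n$-zero-sum. The multiplicity bookkeeping (you need at most $n-3$ copies each of $0$ and $1$, and $n-1$ are available) and the final case split on $(p,q)$ both check out.

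The only point worth flagging is that your proof is not self-contained: the inverse EGZ structure theorem for length $2n-2$ is doing the real work, and you are (rightly) citing it rather than proving it. Since the paper's lemma is itself stated as a citation, this is entirely in keeping with how the result is being used; but if you were writing this up independently you would want a precise reference (e.g.\ the characterization in \cite[Theorem~5.8.3]{GeHK} or the treatment via DeVos--Goddyn--Mohar you allude to).
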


	\smallskip
	Although the direct and inverse problems concerning the Gao's constant for all dihedral groups have been completely solved, we present here only the case of the dihedral group $D_6$ of order 6, which we need for our main theorem, and it can be found in \cite[Theorem 8]{Bas} and \cite[Theorem 1.2]{OZ1}.

	\smallskip
	\begin{lemma} \label{lem:D6}~
	Let $S \in \mathcal F (D_6)$ be a sequence.
		\begin{enumerate}
		\item $\E(D_6) = 9$.
		
		\smallskip
		\item If $|S| = 8$, then $S$ is $6$-product-one free if and only if there exist $\alpha, \tau \in D_6$ and $t_1, t_2, t_3 \in \Z$ such that $D_6 = \langle \alpha, \tau \colon \tau^2 = \alpha^3 = 1_G, \alpha \tau = \tau \alpha^{-1} \rangle$, $\gcd(t_1-t_2,3) = 1$ and either $S = (\alpha^{t_1})^{[5]} \bd (\alpha^{t_2})^{[2]} \bd (\tau \alpha^{t_3})$ or $S = 1_G^{[5]} \bd \tau \bd \tau \alpha \bd \tau \alpha^2$.
		\end{enumerate}
	\end{lemma}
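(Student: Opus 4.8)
The plan is to analyze a sequence over $D_6$ through its cyclic normal subgroup $N = \langle \alpha \rangle \cong C_3$ of index two, distinguishing the \emph{rotations} (terms in $N$) from the \emph{reflections} (terms in $\tau N$). Since the quotient $D_6/N \cong C_2$ sends every reflection to the nontrivial element, any product-one subsequence must use an even number of reflections. The computational heart of the argument is a description of the products of a mixed block: if a subsequence consists of rotations $\alpha^{b_1}, \dots, \alpha^{b_p}$ together with an even number $2m$ of reflections $\tau\alpha^{a_1}, \dots, \tau\alpha^{a_{2m}}$, then, ranging over all orderings, its set of products equals
\[
\{ \alpha^{c} : c \equiv \textstyle\sum_{i \in P} a_i - \sum_{i \in M} a_i + \sum_{k=1}^{p}\epsilon_k b_k \pmod 3,\ P \sqcup M = [1,2m],\ |P|=|M|=m,\ \epsilon_k \in \{\pm 1\} \}.
\]
This follows from $\tau\alpha^a \cdot \tau\alpha^b = \alpha^{b-a}$ and the identity $\alpha^{b}(\tau\alpha^a) = (\tau\alpha^a)\alpha^{-b}$: sliding every rotation to the right past the reflections preserves the reflection word (whose product is a balanced signed sum of the $a_i$, by choice of ordering) and multiplies each $b_k$ by $(-1)^{(\text{reflections to its right})}$, a sign that can be prescribed freely once $2m\ge 2$. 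I would record this as a preliminary lemma, since it reduces every search for a $6$-product-one subsequence to a signed-sum problem modulo $3$.

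For the lower bound $\E(D_6) \ge 9$ and the ``if'' direction, I would verify that the two displayed families of length $8$ are $6$-product-one free using the formula. For $S = (\alpha^{t_1})^{[5]} \bd (\alpha^{t_2})^{[2]} \bd \tau\alpha^{t_3}$ the single reflection forces $m=0$, so any $6$-product-one subsequence would be a $6$-product-one subsequence of the rotation part, which is excluded by Lemma~\ref{inverseCn}(3) with $n=3$ since $\gcd(t_1-t_2,3)=1$. For $S = 1_G^{[5]} \bd \tau \bd \tau\alpha \bd \tau\alpha^2$ there are only $5$ rotations, so $m \ge 1$; with the three reflections pairwise distinct, every reflection contribution $\pm(a_i-a_j)$ is a nonzero residue while the rotation contribution is $0$, so no product equals $1_G$.

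For the upper bound and the ``only if'' direction I would fix a $6$-product-one free sequence $S$ of length $L \ge 8$ and run a case analysis on the number $r$ of reflections. When $r \le 1$ the reflections cannot be used, so the rotation part (of length $\ge L-1 \ge 7$) is itself $6$-product-one free; Lemma~\ref{inverseCn}(2),(3) then forces $L = 8$, $r = 1$, and the rotation part to be extremal, yielding exactly the first family. When $r \ge 2$ I would exploit the formula: choosing two reflections with equal exponent contributes $0$, while four rotations carrying free signs realise any target residue \emph{unless} all available rotations are trivial, in which case six trivial rotations already give $1_G$ whenever $L - r \ge 6$. Pushing this through shows that $r = 2$ never occurs for an extremal sequence, that $r = 3$ forces all rotations trivial and the three reflections pairwise distinct (the second family, available only at $L=8$ where $L-r=5<6$), and that $r \ge 4$ always admits a $6$-product-one subsequence (e.g. two coincident reflections plus four rotations, or four equal reflections squaring to $1_G$ plus two trivial rotations). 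Collecting the surviving cases yields $L \le 8$, hence $\E(D_6) = 9$, together with the stated classification.

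The main obstacle is the bookkeeping in the range $2 \le r \le 4$, where one must show that the signed-sum freedom supplied by the reflection pairs and by the rotation signs suffices to hit the residue $0$ in \emph{every} configuration except the two extremal ones. The delicate sub-cases are those with few nontrivial rotations (so the rotation signs contribute little) combined with reflections that are nearly all equal (so the reflection differences contribute little); these must be settled by hand while simultaneously tracking how many reflections and rotations are consumed, so that the chosen subsequence has length exactly $6$. This is precisely where the small size of $D_6$, and its abundance of low-multiplicity configurations, produces more extremal shapes than $D_{2n}$ with $n>3$, matching the obstruction noted in the introduction.
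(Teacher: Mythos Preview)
The paper does not give its own proof of this lemma: it is quoted as a known result, with part (1) attributed to \cite[Theorem 8]{Bas} and part (2) to \cite[Theorem 1.2]{OZ1}. So there is no argument in the paper to compare your proposal against. Your plan --- splitting terms into rotations and reflections, recording products as signed sums modulo $3$, and running a case analysis on the number $r$ of reflections --- is a perfectly reasonable self-contained route, and your treatment of the lower bound and of the cases $r\le 1$ and $r=3$ is correct.

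There is, however, a real gap in your handling of large $r$. Your sketch for $r\ge 4$ offers two constructions (``two coincident reflections plus four rotations'' and ``four equal reflections plus two trivial rotations''), but for $L=8$ and $r\in\{5,6,7,8\}$ there are at most $3$ rotations available, so the first construction is unavailable, while ``four equal reflections'' is not guaranteed by pigeonhole (e.g.\ reflection multiplicities $(2,2,1)$ at $r=5$, or $(5,1,0)$ at $r=6$). These cases genuinely require using four or six reflections with a suitable balanced partition, and that analysis is not indicated in your plan; your closing paragraph flags only the range $2\le r\le 4$ as delicate, whereas the reflection-heavy configurations $r\ge 5$ also need nontrivial casework. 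A smaller imprecision: the claim that ``four rotations carrying free signs realise any target residue unless all are trivial'' is false for a \emph{fixed} quadruple (take exponents $1,0,0,0$: the signed sums are $\pm 1$ only); what is true, and what your argument actually needs, is that among $\ge 5$ rotations one can always \emph{select} four whose signed sums cover $\mathbb Z/3\mathbb Z$, or alternatively combine a reflection contribution with a rotation contribution to reach $0$. If you tighten these points and carry out the casework for $r\ge 4$ (splitting according to the multiset of reflection exponents and the number of trivial rotations), the approach goes through.
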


	\smallskip
	The following lemma is essentially contained in Claim A in the proof of \cite[Proposition 5.1]{AMR}, but we include its proof here for completeness.

	\smallskip
	\begin{lemma} \label{lem:claima}~
		Let $G = C_n \rtimes_s C_2=\langle x,y \colon x^2 = y^n = 1_G, yx = xy^s \rangle$, where $n\ge 3$ is odd such that $s^2\equiv 1\pmod n$, $S  \in \F(G)$ with $|S| = n_2$ be such that $\pi(S)$ is a singleton set, and $n_1, n_2$ be defined as in Remark $\ref{rmk}$.
		\begin{enumerate}
			\item If $\pi(S) \subset \langle y^{n_2} \rangle$ and $|S_{x\langle y \rangle}| \ge 1$, then $\pi(S) = \{1_G\}$.
			
			\item If $\pi(S) \subset x\langle y^{n_2} \rangle$, then $S = xy^{\beta_1} \bd \ldots \bd xy^{\beta_{2k-1}} \bd y^{\beta_{2k}} \bd \ldots \bd y^{\beta_{n_2}}$ for some $k\ge 1$, where $\beta_1 \equiv \cdots \equiv \beta_{2k-1} \pmod {n_1}$ and $\beta_{2k} \equiv \cdots \equiv \beta_{n_2} \equiv 0 \pmod {n_1}$. Moreover, $\pi(S) = \{xy^{\beta_1 + tn_1}\}$ for some $t \in \Z$ with $\beta_1 + tn_1 \equiv 0 \pmod {n_2}$.
		\end{enumerate}
	\end{lemma}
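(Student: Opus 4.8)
The plan is to push $S$ into the dihedral quotient $G/\langle y^{n_1}\rangle$, where the statement becomes an elementary computation. Put $H=\langle y^{n_1}\rangle$. Since $n$ is odd we have $\gcd(n_1,n_2)=1$, so $H$ is central of order $n_2$ and (cf.\ Remark~\ref{rmk}) $G/H\cong D_{2n_1}$ via $\phi_H$, which sends $y$ to a generator of the rotation subgroup and $x$ to a reflection; thus $\phi_H(y^\beta)$ is a rotation, $\phi_H(xy^\beta)$ is a reflection, and $\phi_H(y^\beta)=1_G$ exactly when $n_1\mid\beta$. Because $\pi(S)$ is a singleton, so is $\pi(\phi_H(S))=\phi_H(\pi(S))$, so everything reduces to understanding singleton product sets over $D_{2n_1}$.

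The key step I would prove first is the following self-contained fact: \emph{if $m\ge1$ is odd and $T\in\F(D_{2m})$ has at least one reflection term and $\pi(T)$ is a singleton, then every rotation term of $T$ equals the identity and all reflection terms of $T$ are equal}. Write $D_{2m}=\langle r,t\mid r^m=t^2=1_G,\ trt=r^{-1}\rangle$, so that $(tr^a)(tr^b)=r^{b-a}$, $(tr^a)(r^c)=tr^{a+c}$ and $(r^c)(tr^a)=tr^{a-c}$. If $r^c$ is a rotation term and $f$ a reflection term of $T$, comparing the orderings $f\bd r^c\bd R$ and $r^c\bd f\bd R$ (with $R$ any fixed ordering of the remaining terms) gives $fr^c=r^cf$, i.e.\ $r^{2c}=1_G$, whence $c\equiv0\pmod m$ as $m$ is odd; so every rotation term is the identity and may be deleted without changing $\pi(T)$. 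If $tr^{b_i}$ and $tr^{b_j}$ are reflection terms of the resulting (reflection-only) sequence, comparing $tr^{b_i}\bd tr^{b_j}\bd R'$ with $tr^{b_j}\bd tr^{b_i}\bd R'$ gives $r^{b_j-b_i}=r^{b_i-b_j}$, i.e.\ $2(b_i-b_j)\equiv0\pmod m$, so $b_i\equiv b_j\pmod m$; hence all reflection terms coincide. (If $T$ has a single reflection term the conclusion is immediate.)

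Now apply this fact to $T=\phi_H(S)$; its reflection terms are exactly the images of the terms of $S$ lying in $x\langle y\rangle$. Thus every term $y^\beta$ of $S$ has $n_1\mid\beta$, and all terms $xy^\beta$ of $S$ share a common residue of $\beta$ modulo $n_1$. For part~(2), $\pi(S)\subset x\langle y^{n_2}\rangle$ forces $|S_{x\langle y\rangle}|$ to be odd, say $2k-1$ with $k\ge1$, and relabelling the terms we obtain $S=xy^{\beta_1}\bd\cdots\bd xy^{\beta_{2k-1}}\bd y^{\beta_{2k}}\bd\cdots\bd y^{\beta_{n_2}}$ with the stated congruences. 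Moreover $\phi_H(S)$ then consists of $2k-1$ copies of the reflection $\phi_H(xy^{\beta_1})$ together with identity terms, so its unique product is $\phi_H(xy^{\beta_1})$; hence $\sigma(S)\in xy^{\beta_1}H=\{xy^{\beta_1+tn_1}\colon t\in\Z\}$, and $\sigma(S)\in x\langle y^{n_2}\rangle$ pins down a $t$ with $\beta_1+tn_1\equiv0\pmod{n_2}$. For part~(1), $\pi(S)\subset\langle y^{n_2}\rangle$ makes $\phi_H(\sigma(S))$ a rotation, so $|S_{x\langle y\rangle}|$ is even, hence $\ge2$ by hypothesis; then $\phi_H(S)$ is an even number of copies of one reflection together with identity terms, so its product is $1_G$, i.e.\ $\sigma(S)\in H=\langle y^{n_1}\rangle$. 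Since also $\sigma(S)\in\langle y^{n_2}\rangle$ and the orders $|\langle y^{n_1}\rangle|=n_2$ and $|\langle y^{n_2}\rangle|=n_1$ are coprime, these subgroups meet trivially, so $\sigma(S)=1_G$ and $\pi(S)=\{1_G\}$.

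The only real obstacle is the bookkeeping: identifying $G/\langle y^{n_1}\rangle$ with $D_{2n_1}$ correctly and keeping track of the parity of $|S_{x\langle y\rangle}|$ in the two cases. Once the dihedral fact above is isolated, its proof is just the two swap comparisons, and everything else is a translation between $G$ and $G/\langle y^{n_1}\rangle$.
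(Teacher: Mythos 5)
Your proof is correct and takes essentially the same route as the paper: the paper carries out the same two swap comparisons ($xy^{\beta_u}xy^{\beta_v}=xy^{\beta_v}xy^{\beta_u}$ iff $\beta_u\equiv\beta_v\pmod{n_1}$, and $y^{\beta_u}xy^{\beta_v}=xy^{\beta_v}y^{\beta_u}$ iff $\beta_u\equiv 0\pmod{n_1}$) directly in $G$, using $s\equiv-1\pmod{n_1}$ and $s\equiv1\pmod{n_2}$, and concludes via $\langle y^{n_1}\rangle\cap\langle y^{n_2}\rangle=\{1_G\}$ exactly as you do. Packaging the swap argument as a self-contained fact in the quotient $G/\langle y^{n_1}\rangle\cong D_{2n_1}$ is only a cosmetic difference.
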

	
	\begin{proof}
		1. Note that $|S_{x\langle y \rangle}|$ is even, so we may write
		    \[
		      S \, = \, xy^{\beta_1} \bd \ldots \bd xy^{\beta_{2k}} \bd y^{\beta_{2k+1}} \bd \ldots \bd y^{\beta_{n_2}} \,.
		    \]
		    Note that $\pi(S)=\{\sigma(S)\}$ is a singleton.
		    Since reordering the product must yield the same value, we have that $xy^{\beta_u}  xy^{\beta_v} = xy^{\beta_v}  xy^{\beta_u}$ if and only if $\beta_u \equiv \beta_v \pmod {n_1}$, and $y^{\beta_u}  xy^{\beta_v} = xy^{\beta_v}  y^{\beta_u}$ if and only if $\beta_u \equiv 0 \pmod {n_1}$. It follows that $\sigma(S) \in \langle y^{n_1} \rangle \cap \langle y^{n_2} \rangle = \{1_G \}$, and hence $\pi(S) = \{1_G \}$.

		2. In this case, $|S_{x\langle y \rangle}|$ is odd, and the assertion follows by a similar argument as above.
	\end{proof}

	\bigskip
	\section{A tool from Additive Theory} \label{sec:additive}

	Let us consider a metacyclic group of order $6n_2$, which is the exceptional case in \cite{AMR},
	\[
	  C_{3n_2} \rtimes_s C_2 \, = \, \langle x,y \colon x^2 = y^{3n_2} = 1_G, yx = xy^s \rangle  \, = \, \langle y^3 \rangle \times \langle x,y^{n_2} \rangle \cong \, C_{n_2} \times D_6 \,,
	\]
	where $\gcd(6,n_2) = 1$,  $s\equiv -1\pmod 3$, and $s\equiv 1 \pmod {n_2}$.
	This potential exceptional case is handled by using the DeVos-Goddyn-Mohar Theorem (see Lemma \ref{le1}), which is a generalization of the Cauchy-Davenport Theorem and Kneser Theorem (see \cite[Theorem 6.1 and Theorem 6.2]{Gry}).
	Indeed, the DeVos-Goddyn-Mohar Theorem is essential for our purposes, since it either forces the existence of a unique coset $y^{3t}H$ containing many terms of a sequence $S$ over $\langle y \rangle$, where $H$ denotes the stabilizer of the set of $(n_2-1)$-subproducts of the projection of $S$ onto $\langle y^3 \rangle$, or yields that this set of subproducts coincides with $\langle y^3 \rangle$, as will be shown in the proof of Proposition \ref{pro1}, ultimately leading to a contradiction.

	\smallskip
	\begin{lemma}[DeVos-Goddyn-Mohar {\cite[Corollary 1.5]{DGM} or \cite[Theorem 13.1]{Gry}}] \label{le1}~
		Let $G$ be a multiplicatively written finite abelian group, $S\in \mathcal F(G)$ be a sequence, $n \in [1,|S|]$, and $H = \HH \big( \Pi_n (S) \big)$. Then
		\[
		|\Pi_n (S)| \,\, \ge \,\, \min\left\{ |G| \, , \, \left( \sum_{g \in G/H} \min \{n, \mathsf v_g \big( \phi_H (S) \big) \} - n + 1 \right)|H| \right\} \,.
		\]
	\end{lemma}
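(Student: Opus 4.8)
The plan is to deduce the inequality from Kneser's theorem (\cite[Theorem 6.2]{Gry}) by encoding $n$-term subproducts as ordinary product sets via the device of setpartitions. Recall that an \emph{$n$-setpartition} of a subsequence $T \mid S$ is a factorization $T = A_1 \bd \cdots \bd A_n$ into $n$ nonempty subsequences; regarding each $A_i$ as the subset $\supp(A_i) \subseteq G$ and choosing one element from each part produces a subproduct of $S$ of length exactly $n$, so that the product set satisfies
\[
A_1 \cdots A_n \, = \, \{ a_1 \cdots a_n \colon a_i \in A_i \} \subseteq \Pi_n(S)
\]
for every such setpartition. Consequently, fixing any $n$-setpartition and writing $K = \HH(A_1 \cdots A_n)$, Kneser's theorem yields
\[
|\Pi_n(S)| \ge |A_1 \cdots A_n| \ge \sum_{i=1}^n |A_i K| - (n-1)|K| \, = \, |K| \Big( \sum_{i=1}^n |\phi_K(A_i)| - (n-1) \Big) .
\]
The argument then reduces to choosing the setpartition so that this lower bound matches the claimed expression.

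To make $\sum_i |\phi_K(A_i)|$ as large as possible I would distribute the terms of $S$ among the $n$ parts so that, modulo $H = \HH(\Pi_n(S))$, each coset is represented in as many parts as possible. For a coset $g \in G/H$ containing $\mathsf v_g(\phi_H(S))$ terms of $S$, at most $\min\{n, \mathsf v_g(\phi_H(S))\}$ distinct parts can receive a term from $g$ — there are only $n$ parts and only $\mathsf v_g(\phi_H(S))$ terms to spread — and this maximum can be attained by placing those terms in distinct parts. Summing the part–coset incidences over all cosets then gives $\sum_{i=1}^n |\phi_H(A_i)| = \sum_{g \in G/H} \min\{n, \mathsf v_g(\phi_H(S))\}$, and since $|S| \ge n$ one checks that this total is at least $n$, so filling all $n$ parts is possible. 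The feasibility of realizing every coset's maximum simultaneously while keeping each of the $n$ parts nonempty is a degree-constrained bipartite assignment, which I would settle by a Hall-type or integral max-flow argument.

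The main obstacle is that Kneser's bound is governed by the stabilizer $K$ of the particular product set $A_1 \cdots A_n$, which a priori need not equal the target stabilizer $H = \HH(\Pi_n(S))$; only when $K = H$ does $|K|\big(\sum_i |\phi_K(A_i)| - (n-1)\big)$ collapse to exactly $|H|\big(\sum_{g \in G/H} \min\{n, \mathsf v_g(\phi_H(S))\} - n + 1\big)$. I would address this by first reducing to the quotient: since $\Pi_n(S)$ is a union of $H$-cosets we have $|\Pi_n(S)| = |H|\,|\phi_H(\Pi_n(S))|$, and because $\phi_H(\Pi_n(S)) = \Pi_n(\phi_H(S))$ the stabilizer of the image is trivial in $G/H$. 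It then suffices to prove the inequality over $G/H$ in the trivial-stabilizer case, where I would argue that among the setpartitions attaining the incidence maximum one can select a \emph{maximal} one whose product set also has trivial stabilizer, forcing $K$ to be trivial there and hence $K = H$ after lifting. Establishing the existence of such a stabilizer-matching maximal setpartition is the delicate heart of the DeVos–Goddyn–Mohar theorem and is where the real work lies; the incidence count and the passage to the quotient are the routine bookkeeping indicated above.
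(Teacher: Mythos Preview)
The paper does not prove Lemma~\ref{le1}; it is quoted verbatim as a known result with citations to \cite[Corollary~1.5]{DGM} and \cite[Theorem~13.1]{Gry}, and is used as a black box in the proof of Proposition~\ref{pro1}. There is therefore no ``paper's own proof'' to compare against.

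Your sketch is a faithful outline of the argument found in those references: the setpartition device, the application of Kneser's theorem to the product set $A_1\cdots A_n$, the incidence count $\sum_i |\phi_H(A_i)| = \sum_{g\in G/H}\min\{n,\mathsf v_g(\phi_H(S))\}$, and the reduction to the quotient $G/H$ are all the right ingredients. You also correctly identify the genuine difficulty: the stabilizer $K$ of the product set of a particular setpartition need not coincide with $H$, and the substantive content of the DeVos--Goddyn--Mohar theorem is precisely the partition-analogue of Kneser's theorem guaranteeing that a suitable setpartition exists whose product set has stabilizer compatible with the desired bound. Your proposal stops short of supplying that step, as you yourself note, so what you have is an accurate roadmap rather than a proof. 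If you wish to make it self-contained you would need to reproduce the inductive stabilizer-refinement argument from \cite{DGM} or \cite[Chapter~13]{Gry}; otherwise, citing the result as the paper does is the standard practice.
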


	\smallskip
	The following is the main result of this section.

\smallskip
	\begin{proposition} \label{pro1}~
		Let $G = \langle y\rangle = \langle y^3\rangle \times \langle y^{n_2}\rangle$ be a cyclic group of order $3n_2$ with $\gcd(6,n_2)=1$, $G_1=\langle y^3\rangle$, $G_2= \langle y^{n_2}\rangle$, $\varphi_i \colon G \to G_i$ be projections for $i \in  [1,2]$, and $S$ be a $7n_2$-product-$G_2$ sequence over $G$.
		If, for any decomposition
		\[
		  S \, = \, T_1 \bd \ldots \bd T_7 \,\, \mbox{ with $T_i$ $n_2$-product-$G_2$ subsequences, }
		\]
		the sequence $\sigma(T_1) \bd \ldots \bd \sigma(T_7)$ has no 6-product-one subsequence, then $\Pi_{n_2-1} \big( \varphi_1(S) \big) = G_1$.
	\end{proposition}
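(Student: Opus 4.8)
I will argue by contradiction: assuming $\Pi_{n_2-1}(\varphi_1(S)) \neq G_1$, I will construct a decomposition $S = T_1 \bd \cdots \bd T_7$ into $n_2$-product-$G_2$ subsequences for which $\sigma(T_1)\bd\cdots\bd\sigma(T_7)$ has a $6$-product-one subsequence, contradicting the hypothesis. Two standing remarks will be used. \emph{(Translation invariance.)} Since $|G|=3n_2$ and $g^{n_2}\in G_2$ for every $g\in G$, replacing $S$ by a translate $gS$ keeps $S$ a $7n_2$-product-$G_2$ sequence, induces a bijection on such decompositions, and replaces each $\sigma(T_i)$ by $g^{n_2}\sigma(T_i)$; as $(g^{n_2})^6=1_G$ (because $|G_2|=3\mid 6$), this does not affect whether $\sigma(T_1)\bd\cdots\bd\sigma(T_7)$ has a $6$-product-one subsequence, so both the hypothesis and the conclusion are translation-invariant. \emph{(Splitting.)} Iterating Lemma~\ref{inverseCn}(1), a product-one sequence over $C_e$ of length $\ell e$ decomposes into $\ell$ product-one subsequences of length $e$.

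\emph{Step 1 (structure via DeVos--Goddyn--Mohar).} Put $H:=\HH\big(\Pi_{n_2-1}(\varphi_1(S))\big)$ and apply Lemma~\ref{le1} to $\varphi_1(S)$ with $n=n_2-1$. Since $\Pi_{n_2-1}(\varphi_1(S))$ is a union of $H$-cosets and, by assumption, a proper subset of $G_1$, we have $\big|\Pi_{n_2-1}(\varphi_1(S))\big|\le n_2-|H|$; substituting into Lemma~\ref{le1} and simplifying yields
\[
  \sum_{g\in G_1/H}\min\big\{\,n_2-1,\ \v_g(\phi_H(\varphi_1(S)))\,\big\}\ \le\ \frac{n_2}{|H|}+n_2-3\ \le\ 2n_2-3 .
\]
Because the left side equals $|S|=7n_2$ when all multiplicities are $\le n_2-1$, and because two cosets of $\phi_H$-multiplicity $\ge n_2-1$ already violate the bound, there is a \emph{unique} coset $C\in G_1/H$ with $\v_C(\phi_H(\varphi_1(S)))\ge n_2-1$, it has multiplicity $\ge 6n_2+2$, and the remaining multiplicities sum to $\le n_2-2$ (here $H\subsetneq G_1$, since $H=G_1$ would force $\Pi_{n_2-1}(\varphi_1(S))=G_1$). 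Writing $S=S_H\bd S_0$, where $S_H$ collects the terms of $S$ whose $\varphi_1$-image lies in the corresponding coset of $G_1$, we get $|S_H|\ge 6n_2+2$ and $|S_0|\le n_2-2$.

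\emph{Step 2 (normalisation and endgame).} Using translation invariance I replace $S$ by a translate with $\varphi_1(S_H)\subseteq H$ \emph{and} $\sigma(S)=1_G$ simultaneously — the two requirements pin down the two ``coordinates'' of the translating element independently, the second being solvable exactly because $\sigma(\varphi_1(S))=1_{G_1}$ — and then also $\sigma(\varphi_1(S_0))\in H$. The heart of the argument (Step 3 below) is that this normalised $S$ has an $n_2$-product-one subsequence $T$. Granting this: $S\bd T^{[-1]}$ has length $6n_2$ and $\varphi_1(S\bd T^{[-1]})$ is a product-one sequence over $G_1\cong C_{n_2}$ of length $6n_2$, so by the Splitting remark it decomposes into six length-$n_2$ product-one subsequences, which lift to $n_2$-product-$G_2$ subsequences $T_2,\dots,T_7$ of $S\bd T^{[-1]}$. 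With $T_1:=T$ we obtain an $n_2$-product-$G_2$ decomposition $S=T_1\bd\cdots\bd T_7$ with $\sigma(T_1)=1_G=\sigma(S)$; hence $\sigma(T_2)\bd\cdots\bd\sigma(T_7)$, of length $6$ and product $\sigma(S)\sigma(T_1)^{-1}=1_G$, is a $6$-product-one subsequence of $\sigma(T_1)\bd\cdots\bd\sigma(T_7)$ — the desired contradiction.

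\emph{Step 3 (the key subsequence) and the main obstacle.} To produce $T$, note that $S_H$ is a sequence over the cyclic group $H\times G_2$ of order $3|H|$ (as $\gcd(|H|,6)=1$), that $|S_H|\ge 6n_2+2$, and that $\sigma(\varphi_1(S_0))\in H$. Taking $T$ to be all of $S_0$ together with a length-$(n_2-|S_0|)$ subsequence $Q$ of $S_H$, the requirement $\sigma(T)=1_G$ becomes $\sigma(Q)=\sigma(S_H)$, i.e.\ it suffices that $S_H$ has a product-one subsequence of length $6n_2$ (the complement of $Q$ in $S_H$). Since $6n_2$ is a multiple of $3|H|$, in the generic case this reduces to realising $\sigma(S_H)$ as a product of $n_2-|S_0|$ of its terms — when $|S_H|=6n_2+2$, just two terms with a prescribed product — which a short Cauchy--Davenport/pigeonhole argument delivers. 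I expect the main difficulty to be precisely here: the naive iterated-EGZ estimates for extracting a subsequence of the \emph{exact} length $6n_2$ from $S_H$ fall short by $\Theta(|H|)$ once $|H|>1$, so the near-extremal shapes of $S_H$ (supported on very few values) must be treated by hand, using the inverse description of $2e$-product-one-free sequences over $C_e$ from Lemma~\ref{inverseCn}(3) inside $H\times G_2$, together with the freedom of redistributing the few terms of $S_0$ among the blocks rather than placing them all in $T$; one must verify that whenever the required subsequence is unavailable one is in fact forced back to $\Pi_{n_2-1}(\varphi_1(S))=G_1$, contrary to the standing assumption.
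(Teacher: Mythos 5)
Your Steps 1--2 and the endgame are sound: the DeVos--Goddyn--Mohar computation isolating a unique dominant $H$-coset carrying at least $6n_2+2$ terms is essentially the same as the first half of the paper's proof (the paper gets $\v_g\ge 7n_2+1-r$ with $r=[G_1:H]$), the translation-invariance normalisation is legitimate, and the splitting of $S\bd T^{[-1]}$ into six $n_2$-product-$G_2$ blocks via iterated EGZ is correct. The genuine gap is Step 3, which you yourself flag as "the main difficulty" and leave as something "one must verify". This is not a routine verification. When $H$ is trivial your reduction does close: $S_H$ has length $\ge 6n_2+2$ over $C_3$, and greedy extraction of $2n_2$ product-one triples needs exactly $6n_2+2$ terms, so the required length-$6n_2$ product-one subsequence of $S_H$ exists. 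But when $|H|=m>1$ the greedy bound needs $6n_2+3m-1$ terms and you only have $6n_2+2$; moreover the bare statement "a sequence of length $6n_2+2$ over $C_{3m}$ has a product-one subsequence of length exactly $6n_2$" is false in general (e.g.\ $g^{[6n_2-1]}\bd (g^2)^{[3]}$ for a generator $g$ of $C_{3m}$ has none), so any rescue must exploit the full stabiliser condition $\HH\big(\Pi_{n_2-1}(\varphi_1(S))\big)=H$ and possibly the decomposition hypothesis itself. Nothing in the proposal does this, and it is not even clear that an exact $n_2$-product-one subsequence $T$ of $S$ must exist under your standing assumptions; your strategy demands strictly more than the proposition's conclusion requires, since a decomposition can yield a $6$-product-one subsequence $\sigma(T_{i_1})\cdots\sigma(T_{i_6})=1_G$ without any single block having $\sigma(T_i)=1_G$.

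For comparison, the paper avoids this obstacle entirely. From the dominant coset it extracts many subsequences $V_1,\ldots,V_\nu$ (with $\nu\ge 6r+1$) whose $\varphi_1$-projections are $m$-product-one into a fixed coset, groups them into seven $n_2$-product-$G_2$ blocks $T_1,\ldots,T_7$, and then uses the hypothesis together with the inverse result for $C_3$ (Lemma~\ref{inverseCn}.3), which forces every admissible decomposition to produce the rigid shape $(y^{t_1n_2})^{[5]}\bd(y^{t_2n_2})^{[2]}$. A swapping argument on the $V_i$ then shows all $\sigma(V_i)$ coincide, contradicting that shape. No exact product-one block of length $n_2$ is ever needed. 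To salvage your approach you would either have to prove the $|H|>1$ case of Step 3 by a separate inverse-theoretic analysis of near-extremal $S_H$, or abandon the "one exact block" construction in favour of something like the paper's swapping argument.
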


	\begin{proof}
        If $n_2 = 1$, then $G = \langle y \rangle \cong C_3$, $G_1 = \langle y^{3} \rangle = \{ 1_G \}$, and $\Pi_{0} (S) = \{ 1_G \} = G_1$. Thus we may suppose that $n_2 \ge 2$ with $\gcd(6, n_2) = 1$, so  $G_1$ is non-trivial.
		Let $H = \HH \big( \Pi_{n_2-1} \big( \varphi_1(S) \big) \big)$, which is a subgroup of $G_1=\langle y^3\rangle$. Then $H=\langle y^{3r}\rangle$ for some $r \in [1, n_2]$ with $r \mid n_2$ and $m := |H| = \frac{n_2}{r}$.

        If $r=1$, then we are done. Suppose $r\ge 2$. Since $\gcd(6, n_2) = 1$ and $r \mid n_2$, we may assume that $r \ge 5$.

		If $\mathsf v_{g}(\phi_H(\varphi_1(S)))\le n_2-1$  for all $g\in G_1/H$,  then Lemma \ref{le1} implies that
        \begin{align*}
			n_2 \, = \, |G_1| \, &\ge \, \big|\Pi_{n_2-1} \big( \varphi_1 (S) \big)\big| \\
			&\ge \,  \min\left\{n_2 \, , \, (|S|-(n_2-1)+1)|H|\right\} \\
			&\ge \, \min\left\{n_2 \, , \, 6n_2 + 2\right\} \, = \, n_2 \,,
		\end{align*}
		whence $\Pi_{n_2-1} \big( \varphi_1 (S) \big) = G_1$.
		
		If there exist two distinct elements $g_1, g_2\in G_1/H$ such that $\mathsf v_{g_i} \big( \phi_H \big(\varphi_1(S) \big) \big) \ge n_2$ for $i \in \{1,2\}$, then Lemma \ref{le1} implies that
            \begin{align*}
				n_2 \, = \, |G_1| \, &\ge \, \big| \Pi_{n_2-1} \big( \varphi_1(S) \big) \big| \\
				&\ge \, \min\left\{ n_2 \, , \, \big( 2(n_2 - 1) -(n_2-1)+1 \big)|H| \right\} \\
				&= \, \min\left\{ n_2 \, , \, n_2 |H| \right\} \, = \, n_2 \,,
			\end{align*}
		whence $\Pi_{n_2-1} \big( \varphi_1 (S) \big) = G_1$.

        Therefore, we may assume that there exists exactly one $g\in G_1/H$ such that $\mathsf v_{g}(\phi_H(\varphi_1(S)))\ge n_2$, and it follows from Lemma \ref{le1} that
		\begin{align*}
			n_2 \, = \, |G_1| \, & \ge \, \big| \Pi_{n_2-1} \big( \varphi_1 (S) \big) \big| \\
			& \ge \, \min\left\{ n_2 \, , \, \big( |S|-\mathsf v_{g}\big( \phi_H \big( \varphi_1(S) \big) \big)+ (n_2-1) - (n_2-1)+1 \big)|H| \right\} \\
			& \ge \, \min\left\{ n_2 \, , \, \big( 7n_2 + 1 -\mathsf v_{g} \big( \phi_H \big( \varphi_1 (S) \big) \big) \big) \frac{n_2}{r} \right\} \,.
		\end{align*}
        If $\big( 7n_2 + 1 -\mathsf v_{g} \big( \phi_H \big( \varphi_1 (S) \big) \big) \big) \frac{n_2}{r} \ge n_2$, then $\Pi_{n_2-1} \big( \varphi_1 (S) \big) = G_1$, and hence the assertion follows. Thus we may assume that $n_2 \ge \big( 7n_2 + 1 -\mathsf v_{g} \big( \phi_H \big( \varphi_1 (S) \big) \big) \big) \frac{n_2}{r}$, whence $\mathsf v_{g} \big( \phi_H \big( \varphi_1 (S) \big) \big) \ge 7n_2 + 1 - r$, which implies that there exists $t \in [0, r-1]$ such that $\varphi_1 (S)$ has at least $7n_2 + 1 - r$ terms lying in the coset $y^{3t}H$.
		Now, we set
		\[
		  S \, = \, S_1 \bd S_2 \,,
		\]
		where $\varphi_1 (S_1)$ are those terms that belong to the coset $y^{3t}H$ and $\varphi_1 (S_2)$ are those terms that do not belong to the coset $y^{3t}H$. Let $\psi\colon G_1\rightarrow H$ be a map such that $\varphi_1(f)=y^{3t}\psi(f)$ for every $f \in \supp(S_1)$. Then $\psi \big( \varphi_1 (S_1) \big)$ is a sequence over $H$ of length at least $7n_2 + 1 - r \ge 2m-1$.
        By applying $\E(C_m) = 2m-1$ (Lemma \ref{inverseCn}.1), we can repeatedly choose subsequences $V_1, \ldots, V_{\nu}$ of $S_1$ such that $\psi \big( \varphi_1 (V_i) \big)$ is $m$-product-one for each $i \in [1,\nu]$, where
        \[
          \nu \, = \, \left\lceil \frac{7n_2+1-r-(2m-2)}{m} \right\rceil \, = \, 7r - 2 + \left\lceil \frac{3-r}{m} \right\rceil \,.
        \]
        Since $m \ge 1$ and $r \ge 5$, it follows that $\nu - (6r + 1) = (r-3) + \lceil \frac{3-r}{m} \rceil \ge \frac{r-3}{m} + \lceil \frac{3-r}{m} \rceil \ge 0$, whence
        \[
          \nu \, \ge \, 6r + 1\,.
        \]
		Since $\sigma \big( \varphi_1 (V_i) \big) = y^{3tm}$ for every $i \in [1, \nu]$,  we have that $\prod^{\bullet}_{i\in I}V_i$ is an $n_2$-product-$G_2$ sequence  for every $I \subset [1, \nu]$ with $|I| = r$.
		Let $[1,\nu] = I_1 \cup \cdots \cup I_7$ be a disjoint decomposition with $|I_1| = \cdots = |I_6| = r$ and $|I_7| = \nu - 6r \ge 1$.
        Now we set
        \[
          T_i \, = \, \small{\prod}^{\bullet}_{j \in I_i} V_j \,\, \mbox{ for every } \,\, i \in [1,6] \,, \quad \mbox{ and } \quad T_7 \, = \, (T_1 \bd \ldots \bd T_6)^{[-1]} \bd S \,.
        \]
        Since $S$ is a $7n_2$-product-$G_2$ sequence, we obtain that $T_1, \ldots, T_7$ are all $n_2$-product-$G_2$ sequences, and that $\prod^{\bullet}_{j \in I_7} V_j$ divides $T_7$. In view of our assumption that $\sigma(T_1) \bd \ldots \bd \sigma(T_7)$ has no $6$-product-one subsequence,
		it follows from Lemma~\ref{inverseCn}.3 that
		\begin{equation}\label{str}
	    \sigma(T_1) \bd \ldots \bd \sigma(T_7) \, = \, (y^{t_1n_2})^{[5]} \bd (y^{t_2n_2})^{[2]} \,\, \text{ for some distinct $t_1,t_2 \in [0,2]$}.
		\end{equation}
        After renumbering if necessary, we may assume that
        \[
          \sigma(T_1) \, = \, \cdots \, = \, \sigma(T_5) \, = \, y^{t_1n_2}\,.
        \]	
		Let $\lambda, \mu \in [1,7]$ be distinct, and let $k_{\lambda} \in I_{\lambda}, k_{\mu}\in I_{\mu}$.
		Then, by swapping two subsequences $V_{k_{\lambda}}$ and $V_{k_{\mu}}$, we obtain a new decomposition
		\[
		S \, = \, T'_1 \bd \ldots \bd T'_7 \,,
		\]
		where
		\begin{equation}\label{new-decomposition}
		T'_{\lambda} \, = \, V_{k_{\mu}} \bd T_{\lambda} \bd V_{k_{\lambda}}^{[-1]} \,, \quad T'_{\mu} \, = \, V_{k_{\lambda}} \bd T_{\mu} \bd V_{k_{\mu}}^{[-1]}, \quad \mbox{ and } \quad T'_j \, = \, T_j \,\, \mbox{ for all } \,\, j \in [1,7] \setminus \{ \lambda, \mu \} \,.
		\end{equation}
		Since each sequence $T'_j$ is again an $n_2$-product-$G_2$ sequence, it follows from Lemma~\ref{inverseCn}.3 and our assumption that
        \[
          {\small\prod}^{\bullet}_{j \in [1,7]} \sigma(T'_j) \, = \, (y^{t_3n_2})^{[5]} \bd (y^{t_4 n_2})^{[2]}
        \]
        for some distinct $t_3, t_4 \in [0,2]$. It follows from \eqref{new-decomposition} that $t_3 = t_1$, whence
		\begin{equation}\label{new-str}
		{\small\prod}^{\bullet}_{j \in [1,7]} \sigma(T'_j) \, = \, (y^{t_1n_2})^{[5]} \bd (y^{t_4 n_2})^{[2]} \,\, \text{ for some  $t_4 \in [0,2]\setminus \{t_1\}$} \,.
		\end{equation}
		We proceed by the following claim.
		
        \smallskip
		\begin{claim}\label{claim}
			We have
			\begin{enumerate}
				\item If $\lambda,\mu\in [1,5]$, then $\sigma(V_{k_{\lambda}})=\sigma(V_{k_{\mu}})$.
				
                \smallskip
				\item If $\{\lambda,\mu\}= \{6,7\}$, then $\sigma(V_{k_{\lambda}})=\sigma(V_{k_{\mu}})$.
				
                \smallskip
				\item If $\lambda,\mu\in [1,7]$, then $\sigma(V_{k_{\lambda}})=\sigma(V_{k_{\mu}})$.
			\end{enumerate}
		\end{claim}

		\smallskip
		If \ref{claim}.3 holds, then $\sigma(V_i)=\sigma(V_j)$ for any distinct $i,j\in [1,\nu]=I_1\cup\cdots\cup I_7$, which implies that $\sigma (T_1) = \cdots = \sigma (T_6)$, a contradiction to \eqref{str}. Thus, to finish the proof, it suffices to show \ref{claim}.

        \smallskip
		\begin{proof}[Proof of \ref{claim}]
			1. Suppose $\lambda,\mu\in [1,5]$. Then \eqref{new-decomposition} and \eqref{new-str} imply that $\sigma(T_{\lambda}')=\sigma(T_{\mu}')=\sigma(T_{\lambda})=\sigma(T_{\mu})=y^{t_1n_2}$, which implies $\sigma(V_{k_{\lambda}})=\sigma(V_{k_{\mu}})$ as desired.

			2. Suppose $\{\lambda,\mu\}= \{6,7\}$. Then \eqref{new-decomposition} and \eqref{new-str} imply that
			$\sigma(T_{\lambda}')=\sigma(T_{\mu}')=y^{t_4n_2}$ and $\sigma(T_{\lambda})=\sigma(T_{\mu})=y^{t_2n_2}$. It follows from $\sigma(T_{\lambda}')\sigma(T_{\mu})=\sigma(T_{\lambda})\sigma(T_{\mu}')$ that
		\[
		  \sigma(V_{k_{\lambda}})^{2} \, = \, \sigma(V_{k_{\mu}})^{2} \,, \,\, \text{ and hence } \,\, \sigma \big( \varphi_2(V_{k_{\lambda}}) \big)^{2} \, = \, \sigma \big( \varphi_2(V_{k_{\mu}}) \big)^{2} \, \in\,  G_2 \, \cong \, C_3 \,.
		\]
		Thus $\sigma \big( \varphi_2(V_{k_{\lambda}}) \big) = \sigma \big( \varphi_2(V_{k_{\mu}}) \big)$, together with $\sigma \big( \varphi_1(V_{k_{\lambda}}) \big) = \sigma \big( \varphi_1(V_{k_{\mu}}) \big) = y^{3tm}$, implies that $\sigma(V_{k_{\lambda}})=\sigma(V_{k_{\mu}})$ as desired.

		3. By Items 1 and 2, we may assume that $\lambda\in [1,5]$ and $\mu\in [6,7]$. By Item 1 again, we have $\sigma(V_i)=\sigma(V_j)$ for any $i,j\in I_1\cup\cdots\cup I_5$. Now applying Item 1 to  the new decomposition $S \, = \, T'_1 \bd \ldots \bd T'_7 $ instead of $S \, = \, T_1 \bd \ldots \bd T_7 $, we infer that $\sigma(V_i)=\sigma(V_j)$ for any $i,j\in \{k_{\mu}\}\cup I_1\cup\cdots\cup I_5\setminus \{k_{\lambda}\}$, whence $\sigma(V_{k_{\lambda}})=\sigma(V_{k_{\mu}})$ as desired.
		\qedhere[\ref{claim}]	
		\end{proof}
    \end{proof}

	\bigskip
	\section{Proof of Theorem~\ref{thm:main}} \label{sec:proofmainresult}

	We first show the following proposition, which plays a key role in both parts of the proof of  Theorem \ref{thm:main}.

	\smallskip
	\begin{proposition} \label{prop:main}~
		Let $G = \langle x, y \colon x^2 = y^{3n_2} = 1_G, yx = xy^s \rangle \cong C_{n_2} \times D_6$, where $n_2\ge 5$ with $\gcd(6, n_2) = 1$,   $s\equiv -1\pmod 3$, and $s\equiv 1 \pmod {n_2}$.
		If $S$ is a sequence over $G$ of length $9n_2-1$ with $|S_{x\langle y\rangle}|\ge 2$, then $S$ has a product-one subsequence of length $6n_2$.
	\end{proposition}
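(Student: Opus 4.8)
The plan is to argue by contradiction: assume $S$ has no product-one subsequence of length $6n_2$. Put $N=\langle y\rangle$, write $S=S_N\bd S_{x\langle y\rangle}$, set $m:=|S_{x\langle y\rangle}|\ge 2$, and use the decomposition $G=\langle y^3\rangle\times\langle x,y^{n_2}\rangle\cong C_{n_2}\times D_6$ with the coordinate projections $\varphi_1\colon G\to\langle y^3\rangle$ and $\varphi_2\colon G\to\langle x,y^{n_2}\rangle$. Since $\langle y^3\rangle$ is a direct factor, a subsequence $R\mid S$ satisfies $1_G\in\pi(R)$ if and only if $\varphi_1(R)$ is a zero-sum sequence over $C_{n_2}$ and $\varphi_2(R)$ is a product-one sequence over $D_6$. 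First I would dispose of two easy regimes. If $S$ has at least $7n_2-1$ terms in $\langle y^3\rangle\cong C_{n_2}$, then applying $\E(C_{n_2})=2n_2-1$ (Lemma~\ref{inverseCn}.1) six times to $S_{\langle y^3\rangle}$ yields six pairwise disjoint zero-sum subsequences of length $n_2$, whose concatenation is a product-one subsequence of $S$ of length $6n_2$, a contradiction. If $m$ is large (so that $|S_N|=9n_2-1-m$ is small), a direct argument pairing reflections of $S_{x\langle y\rangle}$ and invoking Lemma~\ref{lem:D6} and Lemma~\ref{inverseCn} produces the required subsequence. Hence from now on $m\le n_2$ and $S$ has fewer than $7n_2-1$ terms in $\langle y^3\rangle$; in particular $S_N$ then has at least two terms outside $\langle y^3\rangle$, and two fixed reflections of $S_{x\langle y\rangle}$ are available.

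Next I would invoke Proposition~\ref{pro1}. Since $|S_N|=9n_2-1-m\ge 8n_2-1$, iterating $\E(C_{n_2})=2n_2-1$ produces seven pairwise disjoint length-$n_2$ subsequences $T_1,\dots,T_7$ of $S_N$ with each $\varphi_1(T_i)$ zero-sum over $C_{n_2}$; write $S'=T_1\bd\cdots\bd T_7$, a $7n_2$-product-$G_2$ sequence over the cyclic group $N\cong C_{3n_2}$ (with $G_2=\langle y^{n_2}\rangle$), and note that each $\sigma(T_i)$ lies in $\langle y^{n_2}\rangle\cong C_3$. If for some decomposition $S'=T_1'\bd\cdots\bd T_7'$ into $n_2$-product-$G_2$ pieces the $C_3$-sequence $\sigma(T_1')\bd\cdots\bd\sigma(T_7')$ had a $6$-product-one subsequence, then (the ambient group $N$ being abelian, so that $\pi$ is single-valued there) the concatenation of the six corresponding $T_i'$ would be a product-one subsequence of $S$ of length $6n_2$, contradicting our assumption. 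Thus the hypothesis of Proposition~\ref{pro1} holds, and therefore $\Pi_{n_2-1}\big(\varphi_1(S')\big)=\langle y^3\rangle$: every element of $C_{n_2}$ is the $\varphi_1$-sum of some length-$(n_2-1)$ subsequence of $S'$.

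It remains to combine this flexibility with two reflections $xy^{a},xy^{b}\mid S_{x\langle y\rangle}$. A direct computation using $xy^{c}=y^{sc}x$, $s^{2}\equiv1$, $s\equiv1\pmod{n_2}$ and $s\equiv-1\pmod3$ shows that, for disjoint $W,Z\mid S_N$ with $\sigma(W)=y^{w}$, $\sigma(Z)=y^{z}$ and any sub-subsequence $V\mid W\bd Z$ with $\sigma(V)=y^{v}$, the sequence $xy^{a}\bd xy^{b}\bd W\bd Z$, ordered with the terms of $V$ placed between $xy^{a}$ and $xy^{b}$, has product $y^{e}$ with
\[
  e\ \equiv\ w+z+sa+b+(s-1)v \pmod{3n_2}.
\]
Choosing $Z$ to be a union of zero-sum $n_2$-blocks (so $z\equiv0\pmod{n_2}$) and $W$ with $\varphi_1(\sigma(W))=\varphi_1(y^{-a-b})$ (so $w\equiv-a-b\pmod{n_2}$) forces $e\equiv0\pmod{n_2}$, and the residual requirement $e\equiv0\pmod3$ reduces to prescribing $v\bmod3$; since $W\bd Z$ can be arranged to contain two terms of $S_N$ lying outside $\langle y^3\rangle$, the set of subsums of $\varphi_2(W\bd Z)$ over $C_3$ is all of $C_3$ and this requirement is met. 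One then builds $W$ from a length-$(n_2-1)$ subsequence of $S'$ with the prescribed $\varphi_1$-sum (available from $\Pi_{n_2-1}(\varphi_1(S'))=\langle y^3\rangle$, and in a longer prescribed length via a further application of Lemma~\ref{le1}) plus zero-sum padding, and $Z$ from additional zero-sum $n_2$-blocks, so that $|W|+|Z|=6n_2-2$; the resulting $xy^{a}\bd xy^{b}\bd W\bd Z$ is then a product-one subsequence of $S$ of length $6n_2$, the final contradiction.

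The hard part is the precise length accounting in this last step: Proposition~\ref{pro1} delivers a controlled-$\varphi_1$-sum subsequence of length $n_2-1$, while the target length $6n_2-2$ is $\equiv-2$, not $\equiv-1$, modulo $n_2$, so one cannot merely pad with zero-sum blocks of length $n_2$. Reconciling this requires establishing (again via Lemma~\ref{le1}) that $\Pi_j\big(\varphi_1(S')\big)=\langle y^3\rangle$ for a whole range of $j$, and it forces a finer case distinction according to how the terms of $S_N$ are distributed among the cosets of $\langle y^3\rangle$ and of $\langle y^{n_2}\rangle$. The thin, near-extremal cases — in which $\varphi_1(S_N)$, or $S_N$ itself, is highly concentrated in a single coset — together with the regime $m>n_2$ of very many reflections are where the bulk of the casework lies; these are handled directly, using Lemma~\ref{inverseCn} for the cyclic behaviour and Lemma~\ref{lem:D6} for the $D_6$-component.
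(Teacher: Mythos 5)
There is a genuine gap, and it sits exactly where you place the ``hard part''. Your plan hinges on producing, from $\Pi_{n_2-1}\big(\varphi_1(S')\big)=\langle y^3\rangle$, a subsequence $U\mid S_N$ of length exactly $6n_2-2$ with prescribed $\varphi_1$-sum (plus a controllable $C_3$-component), so that $xy^a\bd xy^b\bd U$ can be ordered to give $1_G$. But Proposition~\ref{pro1} only controls subsequences of length $n_2-1$, and $6n_2-2-(n_2-1)=5n_2-1$ is not a multiple of $n_2$, so zero-sum padding by $n_2$-blocks cannot close the count. You acknowledge this and defer to ``$\Pi_j(\varphi_1(S'))=\langle y^3\rangle$ for a whole range of $j$, again via Lemma~\ref{le1}'' --- but that is not a routine consequence of the DeVos--Goddyn--Mohar bound: the proof of Proposition~\ref{pro1} is a delicate swapping argument relying on the inverse theorem for $C_3$ and the specific $7$-block decomposition, not a direct application of Lemma~\ref{le1}, and no analogue for $j=2n_2-2$ (say) is established anywhere. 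In addition, two whole regimes are asserted rather than proved: the ``$m$ large'' case (``a direct argument pairing reflections \dots produces the required subsequence'') and the ``thin, near-extremal'' cases at the end. There is also a smaller unaddressed point: you need the two terms of $S_N$ outside $\langle y^3\rangle$ to land inside $W\bd Z$, which your construction of $W$ and $Z$ from a prescribed-sum piece plus zero-sum blocks does not guarantee. As written, the argument does not close.

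For comparison, the paper avoids the length-accounting problem altogether by a different organization: it extracts \emph{eight} $n_2$-blocks $T_1,\ldots,T_8$ from all of $S$ (reflections are allowed inside the blocks), chosen so that the number of blocks meeting $x\langle y\rangle$ is maximal as in \eqref{maxi}, and then applies the inverse theorem for $\mathsf E(D_6)$ (Lemma~\ref{lem:D6}.2) to the length-$8$ sequence $\sigma(T_1)\bd\ldots\bd\sigma(T_8)$ over $\langle x,y^{n_2}\rangle$. The target $6n_2$-product-one subsequence is then always a union of six blocks of length exactly $n_2$, so no off-by-$n_2$ bookkeeping ever arises; the reflections pair up inside individual blocks, and the cases are governed by the $D_6$ structure (via Lemma~\ref{lem:claima}) rather than by a two-reflection bracket formula. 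Proposition~\ref{pro1} is invoked only in the single subcase where all reflections are trapped in $T_8$, and there the length-$(n_2-1)$ conclusion is used not to build the final subsequence but to swap one reflection into a new block, contradicting the maximality \eqref{maxi}. If you want to salvage your route, that is the idea to borrow: use $\Pi_{n_2-1}=\langle y^3\rangle$ to \emph{redistribute} reflections among $n_2$-blocks, rather than to manufacture a length-$(6n_2-2)$ companion for a fixed pair of reflections.
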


	\begin{proof}Note that $\langle y^{3}\rangle\cong C_{n_2}$ and $\langle x, y^{n_2} \rangle\cong D_6$.
		Let $\varphi\colon G \to \langle y^{3}\rangle$ be the projection. Then $\varphi (S)$ is a sequence over $\langle y^{3}\rangle$ of length $9n_2 - 1$. It follows from $\E(C_{n_2})=2n_2-1$ (Lemma \ref{inverseCn}.1) that we can repeatedly choose $n_2$-product-$\langle x, y^{n_2} \rangle$ subsequences $T_1,\ldots, T_8$ of $S$, that is, every $\varphi(T_i)$ is an $n_2$-product-one sequence.
		Among all the possible  choices of $T_1,\ldots, T_8$, we may assume that
		\begin{equation}\label{maxi}
			\big| \{i\in [1,8]\colon |(T_i)_{x\langle y\rangle}|>0\} \big| \, \text{ is maximal}.
		\end{equation}
        Then
        \[
          T^* \, := \, \sigma(T_1)\,\bd\ldots\bd\, \sigma(T_8)
        \]
        is a sequence over $\langle x, y^{n_2} \rangle$, where $\sigma(T_i)\in \pi(T_i)$ is chosen arbitrarily.
		If $T^*$ has a $6$-product-one subsequence, then we are done. Thus, we can assume that
		\begin{equation}\label{6-free}
		\text{$T^*$ is $6$-product-one free for any choice of $\sigma (T_i)\in \pi (T_i)$.}
		\end{equation}

        We distinguish two cases.

		\smallskip
		\noindent
		{\bf Case 1 :} $|(T^{*})_{x \langle y \rangle}| \ge 2$.
		\smallskip

        Then Lemma~\ref{lem:D6}.2 implies that $T^* = 1_{G}^{[5]} \bd x \bd xy^{n_2} \bd xy^{2n_2}$. After renumbering if necessary, we may assume that $\sigma(T_i) = 1_G$ for all $i\in [1,5]$.
        Then $(T_i)_{x\langle y\rangle}$ is nontrivial for $i \in [6,8]$.
        Note that $\pi (T_i) \subseteq \{ x, xy^{n_2}, xy^{2n_2} \}$ for each $i \in [6,8]$. If $|\pi (T_i)| \ge 2$ for some $i \in [6,8]$, then $\pi (T_i)\cap \big\{ \sigma (T_j) \colon j\in [6,8]\setminus \{i\} \big\} \neq \emptyset$, and by choosing $\sigma^*\in \pi(T_i)\cap \big\{ \sigma (T_j)\colon j\in [6,8]\setminus \{i\} \big\}$,  we obtain  $\sigma^*\bd T^*\bd \sigma(T_i)^{[-1]}$ has a 6-product-one sequence $1_G^{[4]}\bd (\sigma^*)^{[2]}$, a contradiction to \eqref{6-free}.
        Thus
        \begin{equation}\label{sig}
        \text{ $| \pi (T_i)| \, = \, 1$ \,\, for all \, $i \in [6,8]$. }
        \end{equation}
         In view of  Lemma \ref{lem:claima}.2 and $\sigma(T_6)\bd \sigma(T_7)\bd \sigma(T_8)=x \bd xy^{n_2} \bd xy^{2n_2}$, we may assume that there exist positive integers $u,v,w$ such that
        \begin{equation} \label{eq:mod3}
		\begin{aligned}
			T_6 \, & = \, xy^{\beta_1} \bd \ldots \bd xy^{\beta_{2u-1}} \bd y^{\beta_{2u}} \bd \ldots \bd y^{\beta_{n_2}} \,, \\
			       & \quad \, \text{ where } \, \beta_1 \, \equiv \, \cdots \, \equiv \, \beta_{2u-1}  \, \pmod 3 \,\, \text{ and } \,\, \beta_{2u} \, \equiv \, \cdots \, \equiv \, \beta_{n_2} \, \equiv \, 0 \pmod 3 \,, \\
			T_7 \, & = \, xy^{\gamma_1} \bd \ldots \bd xy^{\gamma_{2v-1}} \bd y^{\gamma_{2v}} \bd \ldots \bd y^{\gamma_{n_2}} \,, \\
			       & \quad \, \text{ where } \, \gamma_1 \, \equiv \, \cdots \, \equiv \, \gamma_{2v-1} \,  \pmod 3 \,\, \text{ and } \,\, \gamma_{2v} \, \equiv \, \cdots \, \equiv \, \gamma_{n_2} \, \equiv \, 0 \pmod 3 \,, \\
			T_8 \, & = \, xy^{\delta_1} \bd \ldots \bd xy^{\delta_{2w-1}} \bd y^{\delta_{2w}} \bd \ldots \bd y^{\delta_{n_2}} \,, \\
			       & \quad \, \text{ where } \, \delta_1 \, \equiv \, \cdots \, \equiv \, \delta_{2w-1} \,  \pmod 3 \,\, \text{ and } \,\, \delta_{2w} \, \equiv \, \cdots \, \equiv \, \delta_{n_2} \, \equiv \,  0 \pmod 3 \,.
		\end{aligned}
        \end{equation}
    Furthermore, we have $\sigma(T_6)=xy^{\beta_1+3t_6}=xy^{\epsilon_6n_2}$, $\sigma(T_7)=xy^{\gamma_1+3t_7}=xy^{\epsilon_7n_2}$, and $\sigma(T_8)=xy^{\delta_1+3t_8}=xy^{\epsilon_8n_2}$, where $t_6,t_7, t_8\in \Z$ and $\{\epsilon_6, \epsilon_7,\epsilon_8\}= [0,2]$. Since $n_2\not\equiv 0 \pmod {3}$, after renumbering if necessary, we may assume that
    \begin{equation}\label{equiv}
      \beta_1 \, \equiv \, 0 \, \pmod 3 \,, \quad \gamma_1 \, \equiv \, 1 \, \pmod 3 \,, \quad \mbox{ and } \quad \delta_1 \, \equiv \, 2 \, \pmod 3 \,.
    \end{equation}
		Since $(xy^{\beta_1} \bd xy^{\gamma_1} \bd xy^{\delta_1})^{[-1]} \bd T_6 \bd T_7 \bd T_8$ has length $3n_2-3\ge 2n_2-1$, there exists an $n_2$-product-$\langle x,y^{n_2} \rangle$ subsequence $T_6'$.
        Since $\varphi \big( (T_6')^{[-1]} \bd T_6 \bd T_7 \bd T_8 \big)$ is a $2n_2$-product-one sequence over $\langle y^3\rangle$,  it is a product of two $n_2$-product-one subsequences, say $T'_7$ and $T'_8$, i.e.,
        \[
          T_6 \bd T_7 \bd T_8 \, = \, T_6' \bd T_7' \bd T_8' \,\, \mbox{ for $n_2$-product-$\langle x, y^{n_2} \rangle$ subsequences $T_i'$.}
        \]
        We obtain a new sequence
        \[
          (T^*)' \, = \, 1^{[5]}_G \bd \sigma(T_6') \bd \sigma(T_7') \bd \sigma(T_8')
        \]
        over $\langle x, y^{n_2} \rangle$, where $\sigma(T_i')\in \pi(T_i')$ is chosen arbitrarily.
        If $(T^*)'$ has a 6-product-one subsequence, then we are done, whence we may assume that
        \begin{equation}\label{6-free*}
        \text{$(T^*)'$ has no 6-product-one subsequence for any choice of $\sigma(T_i')\in \pi(T_i')$.}
        \end{equation}

        If $\big|(T^*)'_{x \langle y \rangle} \big| \ge 2$, then it follows from Lemma \ref{lem:D6}.2 that $(T^*)' = T^*$.
        Since $xy^{\beta_1} \bd xy^{\gamma_1} \bd xy^{\delta_1} \mid T'_7 \bd T'_8$, we infer that either $T'_7$ or $T'_8$, say $T'_7$, must contain at least two terms among $xy^{\beta_1}, xy^{\gamma_1}, xy^{\delta_1}$.
        By symmetry, we may assume that $xy^{\beta_1} \bd xy^{\gamma_1} \mid T'_7$.
        Applying \eqref{sig} to the new sequence $(T^*)'$ instead of $T^*$, we obtain $| \pi (T'_7)| = 1$. By Lemma \ref{lem:claima}.2 again, we obtain $\beta_1 \equiv \gamma_1 \pmod{3}$, a contradiction to \eqref{equiv}.

        Therefore we must have that $\big| (T^*)'_{x \langle y \rangle} \big| = 1$, and hence it follows again by Lemma~\ref{lem:D6}.2 that $\sigma(T_6') \bd \sigma(T_7') \bd \sigma(T_8') = (y^{sn_2})^{[2]} \bd xy^{kn_2}$ for some $s \in [1,2]$ and $k \in [0,2]$.
        We may assume that $\sigma (T'_8) = xy^{kn_2}$. Note that $\pi (T'_i) \subseteq \{ 1_G, y^{n_2}, y^{2n_2} \}$ for $i \in [6,7]$.  If $|\pi(T'_i)| \ge  2$ for some $i \in [6,7]$,
        then letting $\sigma^*\in \pi(T_i')\setminus \{\sigma(T_i')\}$ implies that either $\sigma^*=1_G$ or $\sigma^*\sigma(T_j')=1_G$, where $j\in [6,7]\setminus \{i\}$. Both cases imply that $\sigma^*\bd (T^*)'\bd (\sigma(T_i'))^{[-1]}$ has a 6-product-one subsequence, a contradiction to \eqref{6-free*}.
        Then Lemma~\ref{lem:claima}.1 implies
        \[
          \text{ $\big| (T_6')_{x\langle y \rangle} \big| \, = \, \big| (T_7')_{x\langle y \rangle} \big| \, = \, 0$\,, \, and hence $xy^{\beta_1} \bd xy^{\gamma_1} \bd xy^{\delta_1} \mid T_8'$. }
        \]
		In view of (\ref{eq:mod3}), we have that
		\[
          \big|(T_6 \bd T_7 \bd T_8)_{\langle y^3 \rangle} \big| \, = \, \big| (T_6' \bd T_7' \bd T_8')_{\langle y^3 \rangle} \big| \, \ge \, \big| (T_6' \bd T_7')_{\langle y^3 \rangle} \big| \, = \, \big| (T_6' \bd T_7')_{\langle y \rangle} \big| \, = \, 2n_2 \,,
        \]
		which ensures by $\mathsf E (C_{n_2}) = 2n_2 - 1$ that $(T_6 \bd T_7 \bd T_8)_{\langle y^3 \rangle}$ has an $n_2$-product-$\langle x,y^{n_2} \rangle$ subsequence $T_0$.
        Thus $\sigma(T_0) \in \langle y^3 \rangle \cap \langle x,y^{n_2} \rangle = \{ 1_G \}$, whence $T_1 \bd \ldots \bd T_5 \bd T_0$ is a $6n_2$-product-one subsequence of $S$.

		\smallskip
		\noindent
		{\bf Case 2 :} $\big| (T^*)_{x \langle y \rangle} \big| = 1$.
		\smallskip

        By Lemma~\ref{lem:D6}.2, we have $T^*= (y^{t_1n_2})^{[5]}\bd (y^{t_2n_2})^{[2]} \bd (xy^{t_3n_2})$ for $t_1, t_2, t_3 \in [0,3]$ with $\gcd (t_1 - t_2, 3) = 1$.
		After renumbering if necessary, we may assume that $\sigma(T_i)=y^{t_1n_2}$ for all $i\in [1,5]$ and $\sigma(T_j)= y^{t_2n_2}$ for all $j\in [6,7]$.
        Note that $\pi (T_k) \subseteq \{ 1_G, y^{n_2}, y^{2n_2} \}$ for all $k \in [1,7]$. If there exists $k\in [1,7]$ such that $|\pi(T_k)|\ge 2$, then,  letting  $\sigma^*\in \pi(T_i)\setminus \{\sigma(T_i)\}$, it follows from Lemma~\ref{lem:D6}.2 and \eqref{6-free} that $\sigma^*\bd T^*\bd (\sigma(T_i))^{[-1]}=T^*$, whence $\sigma^*=\sigma(T_i)$, a contradiction.
        Thus
        \[
          |\pi(T_k)| \, = \, 1 \,\, \mbox{ for all } \, k \in [1,7] \,.
        \]

        \smallskip
        \noindent
        {\bf Subcase 2.1 :} $\big| (T_1\bd \ldots \bd T_7)_{x\langle y\rangle} \big| \ge 1$.
        \smallskip

		If $ \big| (T_1 \bd \ldots \bd T_5)_{x \langle y \rangle} \big| \ge 1$, say $ |(T_1)_{x\langle y\rangle}|\ge 1$, then Lemma \ref{lem:claima}.1 implies  that $\sigma(T_1) = 1_G$ and hence
		\[
		  \sigma(T_i) \, = \, 1_G \,\, \mbox{ for all } \, i\in [1,5] \,.
		\]
		Let $h$ be  a term of $(T_1)_{x\langle y\rangle}$. Then $h\,\sigma (h^{[-1]} \bd T_1)=1_G$ and hence it follows from $s\equiv -1\pmod 3$ that
		\begin{align*}
          & \, \sigma(T_6)\, h\, \sigma(T_7) \,\sigma (h^{[-1]} \bd T_1) \sigma(T_2) \sigma(T_3) \sigma(T_4) \\
          = & \, y^{t_2n_2}\,h\,y^{t_2n_2}\,\sigma (h^{[-1]} \bd T_1)\\
          = & \, h\,y^{t_2n_2s+t_2n_2}\,\sigma (h^{[-1]} \bd T_1)\\
          = & \, h\,\sigma (h^{[-1]} \bd T_1)\\
          = & \, 1_G \,,
        \end{align*}
		whence $T_1\bd \ldots\bd T_4 \bd T_6 \bd T_7$ is a $6n_2$-product-one subsequence of $S$.
        If $\big| (T_6 T_7)_{x\langle y\rangle} \big| \ge 1$, say $\big| (T_6)_{x\langle y\rangle} \big| \ge 1$, then  Lemma~\ref{lem:claima} again implies that $\sigma(T_i) = 1_G$ for all $i\in [6,7]$. Let $h$ be a term of $(T_6)_{x\langle y\rangle}$.
It follows from $s\equiv -1\pmod 3$ that
\begin{align*}
	& \, \sigma(T_1) \sigma (T_2)\, h\, \sigma(T_3) \sigma (T_4)\, \sigma (h^{[-1]} \bd T_6)\, \sigma (T_7) \\
	= & \, y^{2t_1n_2}\,h\,y^{2t_1n_2}\,\sigma (h^{[-1]} \bd T_6)\\
	= & \, h\,y^{2t_1n_2s+2t_1n_2}\,\sigma (h^{[-1]} \bd T_6)\\
	= & \, h\,\sigma (h^{[-1]} \bd T_6)\\
	= & \, 1_G \,,
\end{align*}
whence $T_1\bd \ldots\bd T_4 \bd T_6 \bd T_7$ is a $6n_2$-product-one subsequence of $S$.

        \smallskip
        \noindent
        {\bf Subcase 2.2 :} $\big| (T_1\bd \ldots \bd T_7)_{x\langle y\rangle} \big| = 0$.
        \smallskip

		Then $Y:=T_1 \bd \ldots \bd T_7$ is a sequence over $\langle y \rangle$ of length $7n_2$ and
		\[
			\big| \{i\in [1,8]\colon |(T_i)_{x\langle y\rangle}| > 0 \} \big| \, = \, 1 \,.
		\]
		If there exists a decomposition $Y=Z_1\bd \ldots \bd Z_7$ into a product of  $n_2$-product-$\langle x, y^{n_2} \rangle$ subsequences $Z_i$ such that $\sigma (Z_1) \bd \ldots \bd \sigma (Z_7)$ has a 6-product-one subsequence, then we are done.
		Thus we may suppose that $\sigma (Z_1) \bd \ldots \bd \sigma (Z_7)$ has no 6-product-one subsequence for any decomposition of $Y=Z_1\bd \ldots \bd Z_7$ into a product of  $n_2$-product-$\langle x, y^{n_2} \rangle$ subsequences $Z_i$.
		In view of Proposition \ref{pro1}, we infer that
		\begin{equation} \label{eq:pi}~
		  \Pi_{n_2 - 1} \big( \varphi (T_1 \bd \ldots \bd T_7) \big) \, = \, \langle y^{3} \rangle \,.
		\end{equation}

		Let $E = S \bd (T_1 \bd \ldots \bd T_8)^{[-1]}$.
		Suppose $\big| E_{x\langle y\rangle } \big| \ge 1$. Let $h$ be a term of  $E_{x\langle y\rangle }$. Since $\varphi(h)\in \langle y^3\rangle$,  it follows from \eqref{eq:pi}  that $T_1 \bd \ldots \bd T_7$ has a subsequence $T_0$ of length $n_2 -1$ such that $\varphi (h \bd T_0 )$ is a product-one sequence, whence $T_8^{[-1]} \bd S$ has an $n_2$-product-$\langle x,y^{n_2} \rangle$ subsequence $T_1'$ with $h \mid T_1'$.
		By applying $\mathsf E (C_{n_2}) = 2n_2 - 1$, we obtain a new decomposition of $S$
		\[
		  S \, = \, T_1'  \bd T_8 \bd T_2' \bd \ldots  \bd T_7' \bd E' \,,
		\]
		where both $T_1'$ and $T_8$ have terms from $x \langle y\rangle$, a contradiction to \eqref{maxi}.

		Thus $|E_{x\langle y\rangle }| = 0$, whence $\big| S_{x\langle y\rangle} \big| = \big| (T_8)_{x\langle y\rangle} \big| \ge 2$. Let $h$ and $h'$ be two terms with $h \bd h'\mid (T_8)_{x\langle y\rangle}$.
		In view of (\ref{eq:pi}), we infer that  $T_1 \bd \ldots \bd T_7$ has a subsequence $T_0$ of length $n_2 - 1$ such that $\varphi (h \bd T_0)$ is a product-one sequence, whence $(h')^{[-1]} \bd S$ has an $n_2$-product-$\langle x,y^{n_2} \rangle$ subsequence $T_1'$ with $h \mid T_1'$.
		By applying  $\mathsf E (C_{n_2}) = 2n_2 - 1$, we  obtain a new decomposition of $S$
		\[
		  S \, = \, T_1' \bd T_2' \bd \ldots \bd T_8' \bd E' \,,
		\]
		where  $h \mid T_1'$ and $h'\mid T_2' \bd \ldots \bd T_8' \bd E'$.
	    By \eqref{maxi}, we must have $h'\mid E'$ and go back to  the previous case that $\big| E'_{x\langle y\rangle } \big| \ge 1$.
		This completes our proof.
	\end{proof}

	\smallskip
	We are now able to prove the main theorem of this paper.

	\medskip
	\begin{proof}[Proof of Theorem~\ref{thm:main}]
		Since $n_2\neq 1$ and $\gcd(n_2,6)=1$, we have  that $n_2\ge 5$.
		
		1. It is known that $\mathsf E(G)\ge 9n_2$ (see \cite[Lemma 4]{ZhGa}). Let $S \in \F(G)$ be a sequence of length $|S| = 9n_2$.
		If $\big| S_{x\langle y \rangle} \big| \le 1$, then $S_{\langle y\rangle }$ is a sequence over $\langle y\rangle$ of length at least $9n_2-1$ and hence Lemma \ref{inverseCn}.2 ensures that $S_{\langle y \rangle}$ has a $6n_2$-product-one subsequence.
		If $\big| S_{x\langle y \rangle} \big| \ge 2$, then Proposition \ref{prop:main} ensures that $S$ has a $6n_2$-product-one subsequence.
		Therefore $\E(G) \le  9n_2$ and hence $\mathsf E(G)= 9n_2$.

		2. Let $S \in \F(G)$ be a sequence of length $|S| = 9n_2-1$.
		If $S_{\langle y \rangle} = S$, then Lemma \ref{inverseCn}.2 implies that $S$ has a $6n_2$-product-one subsequence.
		Thus we may assume that $|S_{x\langle y \rangle}| \ge 1$.
		If $\big| S_{x\langle y \rangle} \big| \ge  2$, then Proposition~\ref{prop:main} implies that $S$  have a $6n_2$-product-one subsequence. Thus
        \[
          |S_{x\langle y \rangle}| \, = \, 1 \,.
        \]
		Hence $\big| S_{\langle y \rangle} \big| = 9n_2 - 2$ and it follows from  Lemma \ref{inverseCn}.3 that there exist $t_1, t_2 \in \Z$  with $\gcd(t_1 - t_2,3n_2)=1$ such that $S_{\langle y \rangle} = (y^{t_1})^{[6n_2-1]} \bd (y^{t_2})^{[3n_2-1]}$.
		The assertion now follows.
	\end{proof}

	\bigskip
	\noindent{\it Concluding remark:}
		It is worth mentioning that the direct and inverse problems related to $\E(G)$ are already solved for the non-split metacyclic groups of the form $\langle x,y \colon x^2 = y^n, y^{2n} = 1_G, yx = xy^s \rangle$, where $s^2 \equiv 1 \pmod {2n}$ (see \cite{Rib,YZF}). The remaining case to complete the direct and inverse problems related to $\E(G)$ for all groups having a cyclic subgroup of index $2$ is when $G = \langle x, y \colon x^{2} = y^{m}, y^{n} = 1_G, yx = xy^{s} \rangle$, where $n = 2^{t}m$ with $t \ge 2$ and $m$ odd.

	\smallskip
	\section*{Acknowledgements}
	This paper was written while J.S. Oh, S. Ribas, and K. Zhao were visiting the Algebra and Number Theory Research Group at the University of Graz in July 2025. The authors are grateful for the hospitality of the group. The problem studied in this paper was proposed by S. Ribas during the seminar ``Problem Session on Additive Combinatorics'', which initiated the collaboration among the authors (see \url{https://imsc.uni-graz.at/AlgNTh/seminar.html}).
	
    J.S. Oh was supported by the National Research Foundation of Korea (NRF) grant funded by the Korea government (MSIT) (NRF-2021R1G1A1095553).
	S. Ribas was partially supported by FAPEMIG grants RED-00133-21, APQ-01712-23 and APQ-04712-25, and CNPq grant 420721/2025-8. K. Zhao  was supported  by National Science Foundation
	of China, Grant \#12301425. Q. Zhong was supported by the Austrian Science Fund FWF, Project P36852-N (Doi: 10.55776/P36852).

    \smallskip

\end{document}